\newlist{cvdesc}{description}{1}
\setlist[cvdesc]{nosep,
labelindent=0pt,
labelwidth=2.8cm,
labelsep*=0.2cm,
leftmargin=cm,
font=\normalfont,
align=right}
\newlist{compactenum}{enumerate}{3}
\setlist[compactenum]{nosep,itemsep=2pt,topsep=2pt,labelindent=1em,leftmargin=1em}
\setlist[compactenum,1]{label=\textbullet}
\newtheorem{theorem}{Theorem}
\newtheorem{lemma}[theorem]{Lemma}
\newtheorem{proposition}[theorem]{Proposition}
\newtheorem{corollary}[theorem]{Corollary}
\newtheorem{remark}[theorem]{Remark}
\newtheorem{definition}[theorem]{Definition}
\newtheorem{example}[theorem]{Example}
\newcommand{\std}{{\ttfamily std}\xspace}
\newcommand{\slimgb}{{\ttfamily slimgb}\xspace}
\newcommand{\singular}{{\sc Singular}\xspace}
\newcommand{\plural}{{\sc Plural}\xspace}
\newcommand{\field}{\ensuremath{K}\xspace}
\newcommand{\fieldx}{\ensuremath{K}}
\newcommand{\PBW}{$G$}
\newcommand{\N}{\mathbb{N}}
\newcommand{\Q}{\mathbb{Q}}
\newcommand{\Z}{\mathbb{Z}}
\newcommand{\K}{\field}
\renewcommand{\d}{\partial}
\DeclareMathOperator{\Ann}{Ann}
\DeclareMathOperator{\cusp}{cusp}
\DeclareMathOperator{\Reiffen}{Reiffen}
\DeclareMathOperator{\LNF}{LNF}
\DeclareMathOperator{\LSP}{LSP}
\begin{document}

\title{Modular Techniques For Noncommutative Gr\"obner Bases}
\author[W. Decker]{Wolfram Decker}
\address{Department of Mathematics, University of Kaiserslautern, Erwin-Schr\"odinger-Str., 67663 Kaiserslautern, Germany}
\email{decker@mathematik.uni-kl.de}
\author[C. Eder]{Christian Eder}
\address{Department of Mathematics, University of Kaiserslautern, Erwin-Schr\"odinger-Str., 67663 Kaiserslautern, Germany}
\email{ederc@mathematik.uni-kl.de}
\author[V. Levandovskyy]{Viktor Levandovskyy$^\dagger$}
\address{Lehrstuhl D f\"ur Mathematik, RWTH Aachen University, Templergraben 64, 52062 Aachen, Germany}
\email{levandov@math.rwth-aachen.de}
\author[S. K. Tiwari]{Sharwan K. Tiwari$^*$}
\address{Department of Mathematics, University of Kaiserslautern, Erwin-Schr\"odinger-Str., 67663 Kaiserslautern, Germany}
\email{stiwari@mathematik.uni-kl.de}
\thanks{$^\dagger$ Corresponding author} 
\thanks{$^*$ Supported by the German Academic Exchange Service (DAAD)}

%
\maketitle

\begin{abstract}
In this note, we extend modular techniques for computing Gr\"obner bases from the
commutative setting to the vast class of noncommutative $G$-algebras. As in
the commutative case, an effective verification test is only known to us in the 
graded case. In the general case, our algorithm is probabilistic in the sense 
that the resulting Gr\"obner basis can only be expected to  generate the given ideal,
with high probability. We have implemented our algorithm in the computer 
algebra system {\sc{Singular}} and give timings to compare its performance  
with that of other instances of Buchberger's algorithm, testing examples 
from $D$-module theory as well as classical benchmark examples.
A particular feature of the modular algorithm is that it allows parallel runs.
\end{abstract}

\section{Introduction}

That the concept of Gr\"obner bases and Buchberger's algorithm for computing these
bases can be extended from the commutative to the noncommutative setting 
was remarked in the late 1980s in the case of Weyl algebras ~\cite{Castro},~\cite{Takayama},
with particular emphasis on the computational treatment of $D$-modules.
\par
Around the same time, Apel~\cite{Apel} introduced a much more general class of algebras,
called  \textit{$G$-algebras}, which are well-suited for Gr\"obner basis methods 
(see ~\cite{Levandovskyy2005, VikHans} for some details). These algebras are 
defined over a field, and are also known as \textit{algebras of solvable type}~\cite{KW, HLi, Kr}, or as 
\textit{PBW-algebras}~\cite {BGV, Roman}. They include the  Weyl algebras together with a variety 
of other important algebras, such as universal enveloping algebras of finite dimensional Lie algebras, 
many quantum algebras (including coordinate rings of quantum affine planes and algebras of quantum 
matrices as well as some quantized enveloping algebras of Lie algebras), and numerous 
algebras formed by common linear partial functional operators. 
\par
The use of Gr\"obner bases allows, more generally, the computational treatment 
of $GR$-algebras, which are factor algebras of $G$-algebras by two-sided ideals, and 
which include Clifford algebras (in particular, exterior algebras) and a number of quantum 
algebras, such as quantum general and quantum special linear groups.
\par
Over the rationals, modular methods not only enable us to avoid intermediate coefficient swell,
but also provide a way of introducing parallelism into our computations.
In the context of Gr\"obner bases, this means to reduce the given ideal modulo several 
primes, compute a Gr\"obner basis for each reduced ideal, and use Chinese remaindering and 
rational reconstruction to find the desired Gr\"obner basis over $\mathbb Q$. 
See~\cite{A, elbert83, pfister2011} for the commutative case. 
\par
In this paper, we extend the modular Gr\"obner basis algorithm to \PBW-algebras. In a final verification step,
the algorithm checks that the result $\mathcal G$ is indeed a left or right Gr\"obner basis, and that the left or right
ideal generated by $\mathcal G$ contains the left or right ideal we started with. In the graded
case, this guarantees the equality of the two ideals and, thus, that $\mathcal G$ is a Gr\"obner 
basis for the given ideal. In the general case, we can only expect equality, with high 
probability. Alternatively, we may apply the algorithm in a homogenized situation, and 
dehomogenize the result (if this is computationally feasible).

\par
The paper is organized as follows:
 In Section~\ref{basicPBW}, we recall the definition and basic properties of $G$-algebras. Then 
 we address gradings, filtrations and the homogenization of \PBW-algebras, and
 Gr\"obner bases. In Section~\ref{sec:modgb}, we present our modular
 algorithm and discuss the final verification test for the graded case.
 In Section~\ref{compresult}, based on our implementation in the 
computer algebra system \singular~\cite{singular, plural}, we compare
the performance of the modular algorithm with that of other variants 
of Buchberger's algorithm. In Section \ref{sect:con},
we conclude the paper with final remarks.
 
\section{Preliminaries}\label{basicPBW}
In this section, we introduce some of the terminology used in this paper.

\subsection{Basic Notation}
\label{sec:1}
We work over a field $K$. Given a finite set of indeterminates
$x := \{x_1,\ldots,x_n \}$, we write ${\left\langle {x}\right\rangle}:=\langle x_{1},\ldots, x_{n} \rangle$ for the free \textit{monoid} on
${x}$, and consider the corresponding monoid algebra $\fieldx\langle {x}\rangle:= \fieldx\langle
x_{1},\ldots, x_{n} \rangle$, that is, the \textit{free associative \field-algebra} generated by $\langle x \rangle$.
\par   
A \textit{monomial} in $x_1,\ldots,x_n$ is an element of $\langle x \rangle$, that is, a word 
in the finite alphabet $x$. A \textit{standard monomial} is a monomial of type  
$x^\alpha=x_{1}^{\alpha_{1}}\cdots x_{n}^{\alpha_{n}}$, where $\alpha=(\alpha_1,\dots,\alpha_n)\in\N^n$. 
  A \textit{standard term} in $\field\langle x \rangle$ is an
  element of $\field$ times a standard monomial.
  A \textit{standard polynomial} in $\field\langle x \rangle$ is a sum $f= \sum c_{\alpha}x^{\alpha}$
  of finitely many nonzero standard terms involving distinct standard monomials. 
  The  \emph{Newton diagram} of such an $f$ is 
 $$
\mathcal N(f)=\{\alpha\in\N^n \mid c_{\alpha}\neq 0\}.
$$
By convention, the zero element is a standard polynomial, with $\mathcal N(0)=\emptyset$.

 \par
To give a partial ordering $>$ on the set of standard monomials means to give a 
partial ordering $>$ on $\N^n$. We only consider orderings which are total, are such that 
$\alpha > \beta$ implies $\alpha + \gamma  > \beta + \gamma$, for all $\alpha, \beta, 
\gamma\in\N^n$, and are well-orderings. By abuse of language, 
we then say that $>$ is a \emph{monomial ordering on}
${\left\langle {x}\right\rangle}$. Given $>$, it makes sense to speak of the \emph{leading exponent} 
$\exp(f)=\exp_>(f)$ 
of a standard polynomial $f\neq 0$.
The lexicographic and degree reverse lexicographic
orderings are defined as usual. Similarly for block orderings.
We write $\epsilon_i=\exp(x_i)=(0,\dots,0,1,0,\dots,0)$, for all $i$.

Given $\omega\in\N^n$, the $\omega$-\textit{weighted degree} of     
 a standard monomial $x^{\alpha}$ is 
 $$
\vert \alpha \vert_{\omega} = \langle \omega, \alpha \rangle = \omega_1 \alpha_1+\cdots+\omega_n \alpha_n,
 $$
while that of a standard polynomial $f\neq 0$
is 
$$
\deg_{\omega}(f)=\max\{\vert \alpha \vert_{\omega} \mid \alpha\in\mathcal N(f)\}.
$$
We say that $f$ is $\omega$-\emph{homogeneous} (of degree $\deg_{\omega}(f)$) 
if $\vert \alpha \vert_{\omega} = \deg_{\omega}(f)$ for all $\alpha \in \mathcal N(f)$.

By convention, the zero element is considered to be $\omega$-homogeneous of each 
degree $d\in\N$, and we write $\deg_{\omega}(0)=-\infty$. For any monomial 
ordering $>$ on ${\left\langle {x}\right\rangle}$,  we obtain a new monomial ordering 
$>_{\omega}$ on ${\left\langle {x}\right\rangle}$ by setting 
$$
\alpha>_{\omega} \beta
\Longleftrightarrow
\left\{
\begin{array}{lcl}
\vert \alpha \vert_{\omega}  >  \vert \beta \vert_{\omega} \\
\text{ or}\\
\vert \alpha \vert_{\omega}  =  \vert \beta \vert_{\omega}  \ \text{ and } \ \alpha >\beta\ .
\end{array}\right .
$$

\subsection{$G$-algebras}

Each  finitely presented associative $K$-algebra $A$ is isomorphic to a factor algebra 
of type $K\langle x_1, \ldots, x_n \rangle/J$, for some $n$ and some two-sided ideal 
$J \subset K\langle x_1, \ldots, x_n \rangle$. If $J$ is given by a set of two-sided
generators $g_1, \dots, g_r$, then we say that \emph{$A$ is generated by 
$x_1, \ldots, x_n$ subject to the relations $g_1, \dots, g_r$}, and write
$$
A = \K\langle x_1, \dots , x_n \mid g_k=0, \ 1\leq k \leq r \rangle.
$$

We say that $A$ as has a \emph{Poincar\'e-Birkhoff-Witt (PBW) basis} if the 
standard monomials in $\K\langle x_1, \dots , x_n\rangle$ represent a $\K$-basis for $A$.
In this case, every element $f\in A$ has a unique representation
$$
f=\sum_{\alpha\in\N^n}c_{\alpha}x^{\alpha},
$$
where we abuse our notation by denoting the class of the standard monomial
$x^{\alpha}$ in $A$ also by $x^{\alpha}$. We then refer to the $c_{\alpha}$ as the
\emph{coefficients} of $f$ and let  the \emph{Newton diagram} $\mathcal N(f)$, 
the $\omega$-\emph{weighted degree} $\deg_{\omega}(f)$, and the property of being
$\omega$-homogeneous be defined as above. Similarly for the \emph{leading exponent} 
$\exp(f)=\exp_>(f)$ 
with respect to a monomial ordering $>$ on $\langle x\rangle$ if  $f\neq 0$. 

$G$-algebras are obtained by imposing specific commutation relations:

\begin{definition}
\label{Galg}
A \emph{$G$-algebra} over $K$ is a factor algebra of type
\[
A = \K\langle x_1, \dots , x_n \mid x_jx_i = c_{ij} \cdot x_ix_j+d_{ij},  \ 1\leq i<j \leq n \rangle,
\]
where the $c_{ij}\in K$ are nonzero scalars and the $d_{ij}\in \K\langle x_1, \dots , x_n\rangle$ are 
standard polynomials such that the following two conditions hold:  
\begin{compactenum} 
\item\label{cond-GB-1} There exists a monomial ordering $>$ on $\langle x_1, \dots , x_n\rangle$ such that
$$
d_{ij}=0\ \text{ or }\ \epsilon_i+\epsilon_j>\exp(d_{ij})\ \text{ for all }\ 1\leq i<j \leq n.
$$
Every such ordering is called \emph{admissible} for $A$.
\item  For all $\ 1 \leq i<j<k \leq n$, the elements
$$
c_{ik}c_{jk} \cdot d_{ij}x_k - x_k d_{ij} + c_{jk} \cdot x_j d_{ik} - c_{ij} \cdot d_{ik}x_j + d_{jk}x_i - c_{ij}c_{ik} \cdot x_i d_{jk}
$$
reduce to zero with respect to the relations of $A$.
\end{compactenum} 
\end{definition}

\begin{example}
\label{ex1}
The \emph{$n$th Weyl algebra over $K$} is the $G$-algebra 
$$
D_n(\field)=\field\langle x_1,\ldots, x_n,
\partial _1,\ldots \partial _n \mid \partial_i x_i=x_i\partial _i +1, \partial _i x_j=x_j \partial _i \ \text { for }\ i\neq j\rangle, 
$$
where we tacitly assume that $x_j x_i=x_i x _j$ and $\partial _j \partial_i=\partial_i \partial _j$ for all $i,j$.
Note that any  monomial ordering on $\langle x, \partial\rangle$ is admissible for $D_n(\field)$. 
\end{example}

\begin{example}  
\label{ex:ex2}
The \emph{$r$th shift  algebra over $K$} is the $G$-algebra 
$$
S_r(K)=\langle s_1, \dots, s_r,  t_1, \dots, t_r \mid t_j s_k = s_k t_j - \delta_{jk} t_j  \rangle,
$$
where we tacitly assume that $s_j s_i=s_i s_j$ and $t_j t_i=t_i t_j$ for all $i,j$. Note that
any monomial ordering on $\langle s,t\rangle$ is admissible for $S_r(K)$. 
\end{example}

\begin{example}  
\label{ex:ex-tensor}
If $A =K\langle x_1,\dots, x_n\mid C_A\rangle$ and $B=K\langle y_1,\dots, y_m \mid C_B\rangle$ 
are $G$-algebras, then  their tensor product $A\otimes_K B$ is a $G$-algebra as well,
$$
A\otimes_K B=K\langle x_1,\dots, x_n,   y_1,\dots, y_m \mid C_A, C_B, y_jx_i=x_iy_j\rangle.
$$
Note that if $>_A$ and $>_B$ are admissible orderings for $A$ and $B$, respectively, then
the block ordering $(>_A,>_B)$ is admissible for $A\otimes_K B$.
\end{example}

$G$-algebras enjoy structural properties which are reminiscent of those
of commutative polynomial rings (see \cite{BGL01, KL, vikThesis, Levandovskyy2005} for definitions 
and proofs\footnote{In the context of this paper, we should point out that the Noetherian property follows 
as in the commutative case from Dickson's lemma using Gr\"obner bases. See Section \ref{sec:GB}.}):

\begin{proposition}
Let $A = \K\langle x_1, \dots , x_n \mid C \rangle$ be a $G$-algebra.  Then: 
\begin{itemize}
\item $A$ has a PBW-basis;
\item $A$ is left and right Noetherian domain;
\item the Gel'fand-Kirillov dimension of $A$ over $\K$ is equal to $n$;
\item the global homological dimension of $A$ is at most $n$;
\item the generalized Krull dimension  of $A$ is at most $n$;
\item $A$ is Auslander regular and Cohen-Macaulay.
\end{itemize}
\end{proposition}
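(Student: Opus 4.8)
\emph{The plan} is to establish the PBW-basis claim directly by a rewriting argument, and then to deduce the remaining five statements uniformly by equipping $A$ with an $\mathbb{N}$-filtration whose associated graded ring is a quasi-commutative polynomial ring, and transporting the relevant properties along it.

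\emph{PBW basis.} I would apply Bergman's Diamond Lemma to the reduction system on $K\langle x\rangle$ with rules $x_jx_i\longrightarrow c_{ij}x_ix_j+d_{ij}$ for $i<j$. Condition~(1), after extending an admissible ordering $>$ to a semigroup well-order on all of $\langle x\rangle$, is exactly what forces this system to terminate: every monomial on the right-hand side of a rule is either $x_ix_j$ with $i<j$ or a standard monomial of $d_{ij}$, hence has exponent $\le\exp(d_{ij})<\epsilon_i+\epsilon_j$, so no infinite chain of reductions exists. The only overlap ambiguities of the system are the words $x_kx_jx_i$ with $i<j<k$, and condition~(2) asserts precisely that each of them is resolvable, the displayed element of the definition being (up to further reduction) the difference between the two ways of reducing $x_kx_jx_i$. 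By the Diamond Lemma the irreducible words, which are exactly the standard monomials, form a $K$-basis of $A$; that is, $A$ has a PBW basis.

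\emph{A quasi-commutative associated graded ring, and transfer.} Next I would produce a weight vector $w\in\mathbb{N}^n$ with all entries $\ge 1$ such that $|\beta|_w<w_i+w_j$ for every $i<j$ with $d_{ij}\ne 0$ and every $\beta\in\mathcal{N}(d_{ij})$. Such a $w$ exists because $\exp(d_{ij})$ is the $>$-largest exponent occurring in $d_{ij}$, so condition~(1) gives $\epsilon_i+\epsilon_j>\beta$ for all the finitely many $\beta$ in question; writing $>$ as a lexicographic comparison of a finite list of real weight covectors (Robbiano) and taking a sufficiently small generic positive combination, generic enough to also keep $x_i>1$ for all $i$, yields one such $w$ witnessing all these strict inequalities at once. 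Filter $A$ by $F_dA=\operatorname{span}_K\{x^\alpha:|\alpha|_w\le d\}$: this is an exhaustive, separated $\mathbb{N}$-filtration by finite-dimensional subspaces with $F_0A=K$, and it is an algebra filtration since rewriting $x_jx_i$ never raises the $w$-degree ($x_ix_j$ has the same $w$-degree $w_i+w_j$, and $d_{ij}$ a strictly smaller one). Comparing PBW bases degree by degree, $\operatorname{gr}_FA$ is exactly the quasi-commutative polynomial ring $R:=K\langle x\mid x_jx_i=c_{ij}x_ix_j,\ i<j\rangle$, an iterated Ore extension $K[x_1][x_2;\sigma_2]\cdots[x_n;\sigma_n]$ with diagonal automorphisms $\sigma_l$ and vanishing twisting derivations. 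From the standard behaviour of Ore extensions by automorphisms (McConnell--Robson, Bj\"ork, Krause--Lenagan, and the references already cited), $R$ is a Noetherian domain of Gel'fand--Kirillov and global dimension $n$, of Krull dimension at most $n$, and is Auslander regular and Cohen--Macaulay. Since $F$ is a positive (hence Zariski) filtration with Noetherian associated graded, these lift: $A$ is left and right Noetherian; $A$ is a domain, as the principal-symbol map is multiplicative into the domain $\operatorname{gr}_FA$; $\operatorname{GKdim}A=\operatorname{GKdim}\operatorname{gr}_FA=n$ (one also sees this directly, as $\dim_KF_dA=\#\{\alpha\in\mathbb{N}^n:|\alpha|_w\le d\}$ grows like $d^n$, while $1,x_1,\dots,x_n$ generate and the $x^\alpha$ are linearly independent); $\operatorname{gl.dim}A\le\operatorname{gl.dim}\operatorname{gr}_FA=n$; $\operatorname{Kdim}A\le\operatorname{Kdim}\operatorname{gr}_FA\le n$ (or simply $\operatorname{Kdim}A\le\operatorname{GKdim}A=n$); and, by Bj\"ork's theorem on lifting the Auslander condition along Zariski filtrations, $A$ is Auslander regular and Cohen--Macaulay.

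\emph{Main obstacle.} The two load-bearing points are the construction of the single positive weight $w$ compatible with all commutation relations at once, whose nontrivial ingredient is the observation that admissibility pushes \emph{every} term of each $d_{ij}$, not merely the leading one, strictly below $\epsilon_i+\epsilon_j$, after which Robbiano's structure theorem plus a perturbation finish it; and the appeal to the homological black boxes, namely the behaviour of Ore extensions by automorphisms and Bj\"ork's lifting theorem for Zariski filtrations. Everything else (termination and confluence of the reduction system, the algebra-filtration check, the symbol-map argument for the domain property, and the Hilbert-function count for the GK-dimension) is routine.
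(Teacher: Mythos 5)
Your proposal is correct, but note that the paper itself gives no proof of this proposition: it delegates everything to the references \cite{BGL01, KL, vikThesis, Levandovskyy2005}, adding only a footnote that the Noetherian property can be obtained ``as in the commutative case from Dickson's lemma using Gr\"obner bases.'' Your argument is precisely the standard one carried out in those references (and in \cite{BGV, HLi}): the Diamond Lemma with condition (2) as the resolvability of the overlaps $x_kx_jx_i$, the positive weight vector $\omega$ realizing the finitely many strict inequalities $\omega_i+\omega_j>|\beta|_\omega$ (the paper itself records the existence of such an $\omega$ in Subsection 2.4, via the same linear-programming/\cite[Lemma 1.2.11]{GP7} mechanism you invoke), and filtered--graded transfer from the iterated Ore extension $\operatorname{gr}_F A$. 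One point is stated too loosely: condition (1) alone does not make the rewriting system terminate in the way you assert, since $x_ix_j$ has the \emph{same} exponent $\epsilon_i+\epsilon_j$ as the word $x_jx_i$ being rewritten; you need a secondary well-founded measure (e.g.\ compare $\omega$-weight first, then the number of inversions in the word, which strictly drops when $x_jx_i$ is replaced by $x_ix_j$ while the $d_{ij}$-terms drop in weight). With that repair the termination argument is standard. The only genuine divergence from what the paper hints at is Noetherianity: you obtain it by lifting along the Zariski filtration from the Noetherian graded ring, whereas the paper's footnote points to the Gr\"obner-basis route (every left ideal has a finite Gr\"obner basis by Dickson's lemma, hence is finitely generated); both are valid, and yours has the advantage of coming for free with the homological statements.
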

\noindent

Computing in a $G$-algebra rather than in a commutative polynomial ring means to
additionally apply the commutation relations in the definition of the $G$-algebra. Over
the rationals, this typically leads to even more coefficient swell. The following example 
illustrates this point and indicates, thus, the particular importance of modular methods
for computations in $G$-algebras over $\Q$. With regard to notation, $[a+b]^n$ stands 
for writing out the right hand side of the binomial formula in its commutative version.  

\begin{example}[\cite{LKM11}] Suppose that $K$ is a field of characteristic zero. 
\begin{compactenum}
\item In the Weyl algebra $D_1(K)=\K\langle x, \d \mid \d x = x \d + 1 \rangle$, we have
$$(x+\d)^n=[x+\d]^n+\sum_{k=0}^{n-2}\sum_{j=0}^{n-k-2}{n\choose j}{n-j\choose k}g(n-j-k)x^k \d^j,$$
where $g(n)=(n-1)!!$ if $n$ is even, and $g(n)= 0$ otherwise. \\
\item In the shift algebra $S_{1}(\K)=K\langle s, t \mid ts=st-t \rangle$, we have
$$(s+t)^n=[s+t]^n+\sum_{k=0}^{n-1}\sum_{j=0}^{n-k-1}(-1)^{n+k+j}{n\choose k}S(n-k,j)s^kt^j,$$
where the $S(n,k)$ denote the Stirling numbers of the second kind.
\end{compactenum}
\end{example}

In what follows, we will summarize some results on $G$-algebras, left or right ideals
in $G$-algebras, and left or right Gr\"obner bases for these ideals. For simplicity of the
presentation, we will focus on the case of left ideals. If $T\subset A$ is any subset of the 
$G$-algebra $A$, the notation $\langle T \rangle={}_A\langle T \rangle$ will always refer 
to the left ideal of $A$ generated by $T$.

\subsection{Graded $G$-algebras}
\label{subsec: gradings}

In this subsection, we consider a $G$-algebra
\[
A = \K\langle x_1, \dots , x_n \mid x_jx_i = c_{ij} \cdot x_ix_j+d_{ij},  \ 1\leq i<j \leq n \rangle
\]
such that the $c_{ij} \cdot x_ix_j+d_{ij}$ are $\omega$-homogeneous for some
weight vector $0\neq\omega\in\N^n$. That is, 
\begin{equation}
\label{equ:graded}
\omega_i + \omega_j=\vert \alpha \vert_{\omega} \ \! \text{ for } \ 1\leq i < j \leq n \ \text{ and all }  \ \alpha \in \mathcal N(d_{ij}).
\end{equation}
Then $A$ is \emph{graded} with respect to the $\omega$-weighted degree,

\begin{align*}
A = \bigoplus_{d\geq 0} A_d, & {} \text{ with }  
\ A_d = \left\{f \in A \mid f \ \omega\text{-homogeneous for all } \alpha\right\},\\[-4\jot]
& {} \text{ and where } \ A_dA_e\subset A_{d+e}  \text{ for all } d,e
\end{align*}
(see \cite[Chapter 4, Section 6]{BGV} for a detailed discussion). In this case, a left
ideal $I\subset A$ is \emph{graded} if it inherits the grading:
$$
I = \bigoplus_{d\geq 0} I_d=\bigoplus_{d\geq 0} (A_d\cap I)\subset A.
$$ 
Equivalently, $I$ is generated by (finitely many) $\omega$-homogeneous elements. 

If $\omega \in\N_{\geq 1}^n$, the $A_d$ are finite-dimensional $K$-vector spaces,
with $A_0=K$. We may then talk about the \emph{Hilbert function}
$$
H_{I}: \N \rightarrow \N, \ d \mapsto \dim_K I_d
$$
of every graded left ideal  $I\subset A$.

\subsection{Filtrations and Homogenization}
\label{subsec: homogenization}

Given any $G$-algebra
\[
A = \K\langle x_1, \dots , x_n \mid x_jx_i = c_{ij} \cdot x_ix_j+d_{ij},  \ 1\leq i<j \leq n \rangle,
\]
a weight vector as in the previous subsection may not exist (consider the Weyl and shift  
algebras). In contrast, there is always an $\omega \in\N_{\geq 1}^n$  satisfying
\begin{equation}
\label{equ:filtered}
\omega_i + \omega_j>\vert \alpha \vert_{\omega} \ \! \text{ for } \ 1\leq i < j \leq n \ \text{ and all }  \ \alpha \in \mathcal N(d_{ij})
\end{equation}
(see \cite[Chapter 3, Section 1]{BGV}). In practical terms, such an $\omega$ can be found 
by solving the following linear programming problem: 

${\it minimize} \ \displaystyle \sum_{i=1}^n \omega_i$ \ \textit{ subject to }\\

\begin{itemize}
\item $\omega_i + \omega_j > \vert \alpha \vert_{\omega} 
\ \! \text{ for } \ 1\leq i < j \leq n \ \text{ and all }  \ \alpha \in \mathcal N(d_{ij}),$
\item  $\omega_1, \ldots, \omega_n > 0$.\\
\end{itemize}

\begin{example}
\label{ex:Weyl2}
For both the Weyl algebra $D_n(K)$ and the shift algebra $S_r(K)$, any $\omega \in \N_{\geq 1}^n$
will do.
\end{example}

\noindent
Suppose now that an $\omega\in\N_{\geq 1}^n$ satisfying \eqref{equ:filtered} is given, and fix a monomial 
ordering $>$ on $\langle x\rangle$. Then the induced ordering $>_\omega$ is admissible 
for $A$. Furthermore, we get a filtration of $A$, the $\omega$-\emph{filtration}, if we set
$$
F_d^{\;\!\omega} \!A =\{f\in A \mid \deg_{\omega}(f)\leq d\} \ \text{ for all } \ d\in\N.
$$
The \emph{Rees algebra} corresponding to this filtration is the subalgebra
$$
R^{\;\!\omega} \!A=\bigoplus_{d\geq 0} (F_d^{\;\!\omega} \!A) t^d\subset A[t]=A\otimes_K K[t].
$$
Note that each element $F\in (F_d^{\;\!\omega} \!A) t^d$ has the form
$$
F=\sum_{\vert \alpha \vert_{\omega}\leq d} c_{\alpha}{x}^{\alpha} t^d=
\sum_{\vert \alpha \vert_{\omega}\leq d} c_{\alpha}\tilde{x}^{\alpha} t^{d-\vert \alpha \vert_{\omega}},
$$
where
$$
\tilde{x}_i=x_it^{\omega_i} \ \text{ and } \ \tilde{x}^{\alpha} = 
\tilde{x}_1^{\alpha_1}\cdots \tilde{x}_n^{\alpha_n}.
$$
So $F$ is $\widetilde{\omega}$-homogeneous of degree $d$, where
$\widetilde{\omega}=(\omega, 1)\in\N^{n+1}$.
In particular, if we set
$$
\widetilde{d}_{ij}={d}_{ij}t^{\omega _i +\omega _j}=\sum_{\alpha\in\mathcal N({d}_{ij})} 
c^{(ij)}_{\alpha} x^\alpha t^{\omega _i +\omega _j} =\sum_{\alpha\in\mathcal N({d}_{ij})} c^{(ij)}_{\alpha}
\tilde{x}^{\alpha}t^{\omega _i +\omega _j-|\alpha|_{{\omega}}},
$$
then the $c_{ij} \cdot \tilde{x_i}\tilde{x_j} + \widetilde{d}_{ij}$ are $\widetilde{\omega}$-homogeneous
of degree $\omega _i +\omega _j$.
We introduce a  monomial ordering $>_{{\omega}}^h$ on $\langle \tilde{x},t\rangle$ by setting 
\[
(\alpha, d) >_{{\omega}}^h (\beta, e)
\Longleftrightarrow
\left\{
\begin{array}{lcl}
|(\alpha,d)|_{\widetilde{\omega}}  >  |(\beta, e)|_{\widetilde{\omega}}\\
\text{ or}\\
|(\alpha,d)|_{\widetilde{\omega}}  =  |(\beta, e)|_{\widetilde{\omega}}\ \text{ and } \ \alpha >_{{\omega}}\beta\ .
\end{array}\right .
\]

\noindent
Then we have:
\begin{theorem}
\label{ReesConstruction}
In the situation above, the Rees algebra $R^{\;\!\omega} \! A$ is a graded $G$-algebra,
$$
R^{\;\!\omega} \! A=\bigoplus_{d\geq 0} (F_d^{\;\!\omega} \! A) t^d= \K\langle \tilde{x}_1, \dots , \tilde{x}_n, t \mid \widetilde{C} \rangle,
$$
with commutation relations
$$
\widetilde{C}: \tilde{x_j}\tilde{x_i} = c_{ij} \cdot \tilde{x_i}\tilde{x_j}
+ \widetilde{d}_{ij},  \ t \tilde{x_i} = \tilde{x_i} t,
$$
and admissible ordering $>_{{\omega}}^h$.  
\end{theorem}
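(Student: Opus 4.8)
The plan is to verify directly that the presentation $\K\langle \tilde x_1,\dots,\tilde x_n,t\mid \widetilde C\rangle$ satisfies the two conditions of Definition~\ref{Galg}, and that the resulting $G$-algebra is isomorphic, as a graded algebra, to the Rees algebra $R^{\,\omega}A$ sitting inside $A[t]$. First I would check that the commutation relations $\widetilde C$ actually hold in $A[t]$: since $t$ is central, $\tilde x_i=x_it^{\omega_i}$ and $\tilde x_j=x_jt^{\omega_j}$ commute with $t$, and $\tilde x_j\tilde x_i=x_jx_i\,t^{\omega_i+\omega_j}=(c_{ij}x_ix_j+d_{ij})t^{\omega_i+\omega_j}=c_{ij}\tilde x_i\tilde x_j+\widetilde d_{ij}$ by the very definition of $\widetilde d_{ij}$. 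So there is a surjective $\K$-algebra map from the abstract presentation onto the subalgebra $R^{\,\omega}A\subset A[t]$, sending the generator symbols to $\tilde x_i$ and $t$.

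Next I would establish condition~\eqref{cond-GB-1}, admissibility. By the computation in the excerpt, each $c_{ij}\tilde x_i\tilde x_j+\widetilde d_{ij}$ is $\widetilde\omega$-homogeneous of degree $\omega_i+\omega_j$, and the relation $t\tilde x_i=\tilde x_i t$ is $\widetilde\omega$-homogeneous of degree $\omega_i+1$. Taking $>$ to be the ordering $>_\omega^h$ on $\langle\tilde x,t\rangle$, I must show $\epsilon_i+\epsilon_j>_\omega^h\exp(\widetilde d_{ij})$ whenever $\widetilde d_{ij}\neq 0$, and likewise $\epsilon_i+\epsilon_t>_\omega^h \epsilon_t+\epsilon_i$ — the latter is trivially an equality on both sides, so it suffices to note the $t$-relation has leading term $\tilde x_it$ which is also the trailing term; more care is needed, but the point is that $t\tilde x_i-\tilde x_it=0$ has no lower terms and poses no obstruction. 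For $\widetilde d_{ij}$: every monomial $\tilde x^\alpha t^{\omega_i+\omega_j-|\alpha|_\omega}$ appearing in it has $\widetilde\omega$-degree exactly $\omega_i+\omega_j=|\epsilon_i+\epsilon_j|_{\widetilde\omega}$, so by the definition of $>_\omega^h$ we compare via $>_\omega$ on the $\tilde x$-part, i.e. via $>_\omega$ on $\alpha$ versus $\epsilon_i+\epsilon_j\in\N^n$; but condition~\eqref{cond-GB-1} for $A$ with respect to $>_\omega$ gives exactly $\epsilon_i+\epsilon_j>_\omega\exp_{>_\omega}(d_{ij})=\exp_{>_\omega}(\widetilde d_{ij})$ (the $t$-exponent being determined by $\widetilde\omega$-homogeneity). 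Here I would need the preliminary observation, stated just before the theorem, that $>_\omega$ is admissible for $A$.

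Then I would verify the overlap (Jacobi-type) condition, the second bullet of Definition~\ref{Galg}, for the triples among $\tilde x_1,\dots,\tilde x_n,t$. For triples $i<j<k$ not involving $t$, the relevant element is the image under $x_\ell\mapsto\tilde x_\ell$ of the corresponding element for $A$, multiplied by a suitable central power of $t$ dictated by $\widetilde\omega$-homogeneity; since that element reduces to zero modulo the relations of $A$ and $t$ is central, its lift reduces to zero modulo $\widetilde C$. For triples involving $t$, say $(\tilde x_i,\tilde x_j,t)$ with $i<j$, the two non-trivial relations are $\tilde x_j\tilde x_i=c_{ij}\tilde x_i\tilde x_j+\widetilde d_{ij}$ and the centrality relations $t\tilde x_i=\tilde x_it$, $t\tilde x_j=\tilde x_jt$; the overlap element becomes a combination that vanishes because $t$ commutes with everything and $\widetilde d_{ij}$ is a polynomial in the $\tilde x$'s times a power of $t$ — again reducing to zero. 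Rather than recompute, the cleanest route is to invoke the fact that we already have a concrete realization: $R^{\,\omega}A$ is a genuine subalgebra of $A[t]$, so the presentation is consistent, and by the diamond-lemma characterization of $G$-algebras (conditions~\eqref{cond-GB-1} plus overlap resolvability are equivalent to the standard monomials forming a basis), it suffices that the standard monomials $\tilde x^\alpha t^d$ are $\K$-linearly independent in $A[t]$ — which is clear, since $\tilde x^\alpha t^d=x^\alpha t^{d+|\alpha|_\omega}$ and the $x^\alpha t^e$ form a basis of $A[t]$. This also shows the surjection above is an isomorphism, and the grading $\bigoplus_d (F_d^{\,\omega}A)t^d$ is exactly the $\widetilde\omega$-grading, so $R^{\,\omega}A$ is graded in the sense of Section~\ref{subsec: gradings}.

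The main obstacle, I expect, is purely bookkeeping: keeping the two gradings straight (the $\omega$-degree on $A$ versus the $\widetilde\omega$-degree on the Rees algebra) and checking that the $t$-exponents in $\widetilde d_{ij}$ are forced to be $\omega_i+\omega_j-|\alpha|_\omega\ge 0$, which uses precisely the inequality~\eqref{equ:filtered} that $\omega$ was chosen to satisfy — so that $\widetilde C$ really is a set of relations among standard monomials and not something with negative powers of $t$. Once that nonnegativity is in hand, everything else is a routine transport of the $G$-algebra axioms for $(A,>_\omega)$ through the central extension by $t$.
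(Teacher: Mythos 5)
Your proposal is correct and matches the paper's intent: the paper's own proof is just ``Clear from the discussion above,'' relying on exactly the observations you spell out (the relations hold in $A[t]$ because $t$ is central, the $\widetilde{\omega}$-homogeneity of the relations, the positivity of the $t$-exponents via \eqref{equ:filtered}, and the admissibility of $>_{\omega}^h$). The one step you make explicit that the paper leaves entirely implicit is the verification of the non-degeneracy condition, and your route via the concrete realization inside $A[t]$ --- linear independence of the $\tilde{x}^{\alpha}t^{d}$ because they equal the basis elements $x^{\alpha}t^{d+|\alpha|_{\omega}}$ --- is the clean and standard way to dispatch it.
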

\begin{proof}
Clear from the discussion above.  
\end{proof}
\begin{definition}[Homogenization and Dehomogenization] In the situation above, the 
\emph{homo\-ge\-ni\-za\-tion} of an element $f\in A$ is the 
${\widetilde{\omega}}$-homogeneous element $f^h=ft^d\in R^{\;\!\omega} \! A$,
where $d=\deg_\omega(f)$. 
The \emph{homogeni\-za\-tion} of a left ideal $I\subset A$ is the graded left ideal 
$$
I^h=\langle f^h \mid f\in I\rangle = \bigoplus_{d\geq 0} (F_d^{\;\!\omega} \! A\cap I) t^d\subset R^{\;\!\omega} \! A.
$$
The \emph{dehomogenization} of an element $F\in (F_d^{\;\!\omega} \! A) t^d$ is the element $F\mid_{t=1}\in A$.
\end{definition}

\noindent 
Note that if $I$ is given by generators $f_1,\dots, f_r$, then
\begin{equation}
\label{equ:homog}
I^h = \langle f_1^h,\dots, f_r^h\rangle : t^\infty. 
\end{equation}

\noindent

\subsection{Gr\"obner Bases in \PBW-Algebras}  
\label{sec:GB}
The concept of  Gr\"obner bases extends from commutative polynomial rings
to \PBW-algebras. We give a brief account of this, referring to \cite{BGV,  MR2503959,  
vikThesis} for details and proofs.

To begin with, recall that a nonempty subset $E\subset \N^n$
is called a \emph{monoideal} if $E +\N^n =E$. Dickson's 
lemma tells us that each such $E$ is \emph{finitely generated}: There exist 
$\alpha_1,\dots, \alpha_s\in \N^n$ such that $E=\bigcup_{i=1}^{s} 
\left({\alpha_i} + \N^n\right)$.

Given a $G$-algebra 
\[
A = \K\langle x_1, \dots , x_n \mid x_jx_i = c_{ij} \cdot x_ix_j+d_{ij},  \ 1\leq i<j \leq n \rangle,
\]
an admissible ordering $>$ for $A$, and a subset $I\subset A$, we set
$$
\exp(I)=\{\exp(f)\mid f\in I\setminus \{0\}\} .
$$
Note that if $I$ is a nonzero left ideal of $A$, then $\exp(I)$ is a monoideal of  $\N^n$.  
Moreover, Macaulay's classical result on factor rings of commutative 
polynomial rings extends as follows:

\begin{remark}
\label{rem:Macaulay}
Let $I$ be a nonzero left ideal of $A$. Then 
the standard monomials $x^\alpha, \ \alpha\in \N^n\setminus\exp(I)$, represent 
a $K$-vector space basis for $A/I$ (see, for example, \cite[Proposition 9.1]{mora1}).
\end{remark}

\begin{definition} Let $I$ be a  nonzero left ideal of $A$. 
A \emph{(left) Gr\"obner basis} for $I$ (with respect to $>$) is a finite subset  
$\mathcal G=\{g_{1},\ldots,g_{s}\} \subset I\setminus \{0\}$ such that
$$
\exp(I)=\bigcup_{i=1}^{s} \left(\exp(g_i) + \N^n\right).
$$
A finite subset  $\mathcal G\subset A\setminus \{0\}$ is  a \emph{left Gr\"obner basis} 
if it is a Gr\"obner basis for the left ideal $\langle \mathcal G\rangle\subset A$ it generates.
\end{definition} 

As in the commutative case, every nonzero left ideal $I\subset A$ has a Gr\"obner 
basis, and every such basis generates $I$ as a left ideal. Furthermore, we have the
concepts of left normal forms ($\LNF$) (that is,  left division with remainder)
and left S-polynomials ($\LSP$) in $G$-algebras. Based on this, there are
adapted versions of Buchberger's criterion and, thus,  Buchberger's algorithm
which allow us to characterize and compute left Gr\"obner bases, respectively:
\begin{theorem}[Buchberger's Criterion]\label{BuchCrit} Let $\mathcal G=\{g_{1},\ldots,g_{s}\}$ be a finite 
subset of $ A\setminus \{0\}$. Then $\mathcal G$ is a left Gr\"obner basis 
iff 
$$\LNF(\LSP(g_i,g_j), \mathcal G) = 0 \;\text{ for }\; 1\leq i<j \leq s.
$$
\end{theorem}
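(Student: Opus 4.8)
The plan is to prove Buchberger's Criterion for left Gröbner bases in $G$-algebras by mimicking the classical commutative argument, with the noncommutative complications handled by the PBW-basis structure and the admissibility of the ordering. The forward implication is immediate from the definition of a Gröbner basis together with the fact that left normal forms are reduced: if $\mathcal{G}$ is a Gröbner basis for $I=\langle\mathcal{G}\rangle$, then each $\LSP(g_i,g_j)\in I$, so its leading exponent (if nonzero) lies in $\exp(I)=\bigcup_i(\exp(g_i)+\N^n)$, hence $\LNF(\LSP(g_i,g_j),\mathcal{G})$ must be $0$ since a nonzero left normal form has leading exponent outside that union by construction.

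The substantial direction is the converse. First I would fix $0\neq f\in I=\langle\mathcal{G}\rangle$ and write $f=\sum_{k=1}^{s} a_k g_k$ with $a_k\in A$, expanding each $a_k$ in the PBW basis so that $f=\sum_{k}\sum_{\alpha} c_{k,\alpha}\, x^\alpha g_k$. Here is where the admissibility of $>$ for $A$ enters crucially: because $>$ is admissible, multiplying $g_k$ on the left by a standard monomial $x^\alpha$ behaves, on leading exponents, like the commutative case, namely $\exp(x^\alpha g_k)=\alpha+\exp(g_k)$ and the remaining terms have strictly smaller exponent (this is exactly the content of condition~(\ref{cond-GB-1}) propagated through products). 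Set $\gamma=\max_{>}\{\alpha+\exp(g_k)\mid c_{k,\alpha}\neq 0\}$; then $\gamma\geq\exp(f)$, and I claim one can always arrange a representation with $\gamma=\exp(f)$. If $\gamma>\exp(f)$, the terms of top exponent $\gamma$ must cancel, which forces a linear relation among the corresponding leading terms $\lc(g_k)x^{\alpha}\,\lm(g_k)$; the standard reduction-syzygy lemma then rewrites the offending combination in terms of $\LSP(g_i,g_j)$'s whose leading exponents are $\gamma$, and the hypothesis $\LNF(\LSP(g_i,g_j),\mathcal{G})=0$ lets us re-express each such S-polynomial as $\sum_k b_k g_k$ with all $\exp(b_k g_k)<\gamma$. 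Substituting back strictly decreases $\gamma$ in the well-ordering $>$, so after finitely many steps we reach a representation with top exponent exactly $\exp(f)$, giving $\exp(f)=\alpha+\exp(g_k)$ for some $k$, i.e. $\exp(f)\in\bigcup_i(\exp(g_i)+\N^n)$. Since this holds for all nonzero $f\in I$, we get $\exp(I)\subseteq\bigcup_i(\exp(g_i)+\N^n)$; the reverse inclusion is trivial as each $g_i\in I$. Hence $\mathcal{G}$ is a left Gröbner basis for $I$.

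The main obstacle is the S-polynomial rewriting step in the noncommutative setting: one must check that the classical cancellation-of-leading-terms lemma still produces a combination of \emph{left} S-polynomials, and that substituting $\LSP(g_i,g_j)=\sum_k b_k g_k$ (valid by the vanishing-normal-form hypothesis, since $\LNF$ computes left division with remainder) does not re-introduce exponents $\geq\gamma$ — this is guaranteed precisely because $\LNF(\cdot,\mathcal{G})=0$ means the division reduces $\LSP(g_i,g_j)$ using left multiples $b_k g_k$ each of exponent $\leq\exp(\LSP(g_i,g_j))<\gamma$. All of this rests on the admissibility of $>$, which ensures the leading-exponent arithmetic of left multiplication is additive, and on the well-ordering property, which powers the induction. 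I would cite \cite{BGV} or \cite{vikThesis} for the noncommutative normal-form and S-polynomial machinery rather than redeveloping it here, and present the above as the adaptation of the commutative proof.
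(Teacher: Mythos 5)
Your argument is correct and is essentially the standard proof: the paper itself does not prove Theorem~\ref{BuchCrit} but quotes it from the literature (\cite{BGV, MR2503959, vikThesis}), and your sketch is precisely the adaptation of the commutative cancellation-of-leading-terms argument found there, with the two noncommutative ingredients correctly identified (additivity of leading exponents under left multiplication by standard monomials, guaranteed by admissibility, and termination via the well-ordering). The only point to state with slightly more care in a full write-up is that $\lc(x^\alpha g_k)$ is not $\lc(g_k)$ itself but $\lc(g_k)$ times a product of the structure constants $c_{ij}$; since these are nonzero scalars this does not affect the syzygy/telescoping lemma, but it does enter the definition of $\LSP$ and the coefficient bookkeeping in the rewriting step.
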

%
The following result will be useful for establishing our modular algorithm.
\begin{corollary}[Finite Determinacy of Gr\"obner Bases]\label{Finite_det} 
Let $I$ be a nonzero left ideal of $A$, and let $\mathcal G=\{g_{1},\ldots,g_{s}\}$ be a 
Gr\"obner basis for $I$ with respect to $>$. There exists a finite set $F$ of standard 
monomials such that if   $>_1$ is any admissible ordering for $A$ which coincides with 
$>$ on $F$, then:
\begin{enumerate}
\item $\exp_{>}(g_i)=\exp_{>_1}(g_i)$ for all $g_i \in \mathcal G$.
\item $\mathcal G$ is a Gr\"obner basis for $I$ also with respect to $>_1$. 
\end{enumerate}
\end{corollary}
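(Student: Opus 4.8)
The plan is to extract from the Gröbner basis $\mathcal{G}$, together with Buchberger's criterion, a finite collection of standard monomials that ``witnesses'' all the order comparisons the algorithm actually performs, and then to observe that any admissible ordering agreeing with $>$ on this set reproduces the same computation verbatim. First I would choose $F$ to contain, for each $g_i\in\mathcal{G}$, every standard monomial occurring in $g_i$ (so in particular $\exp_>(g_i)\in F$, and $F$ determines $\exp_>(g_i)$ as the $>$-largest element of $\mathcal{N}(g_i)$). This already gives part (1): if $>_1$ is admissible and coincides with $>$ on $F$, then $\exp_{>_1}(g_i)$ is the $>_1$-largest monomial of $g_i$, which by the agreement on $F$ equals $\exp_>(g_i)$.

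For part (2) the idea is to run Buchberger's reduction test from Theorem~\ref{BuchCrit} once, with respect to $>$, and collect into $F$ every standard monomial that is compared to another during this finite computation. Concretely: for each pair $i<j$ we form $\LSP(g_i,g_j)$ and reduce it via $\LNF(\cdot,\mathcal{G})$ to $0$; this is a finite process producing finitely many intermediate standard polynomials, and I would add to $F$ all standard monomials appearing in any of them (the leading-term selections in each normal-form step, the monomials of each $\LSP$, and the monomials of the $d_{ij}$-tails that arise when the commutation relations are applied). Since $\mathcal{G}$ is finite and each $\LNF$ terminates, $F$ is finite. Now if $>_1$ is admissible and agrees with $>$ on $F$, then because the leading exponents of the $g_i$ are unchanged (part (1)), the S-polynomials $\LSP_{>_1}(g_i,g_j)$ are literally the same elements as $\LSP_{>}(g_i,g_j)$, and at every step of $\LNF(\cdot,\mathcal{G})$ the ``leading'' monomial selected with respect to $>_1$ is the same as the one selected with respect to $>$ (all the competing monomials lie in $F$). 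Hence the reduction to $0$ goes through identically, so $\mathcal{G}$ satisfies Buchberger's criterion with respect to $>_1$ and is therefore a left Gröbner basis with respect to $>_1$; and since $\mathcal{G}\subset I$ still, it is a Gröbner basis \emph{for $I$} with respect to $>_1$, using that $\langle\mathcal{G}\rangle=I$ (a Gröbner basis generates its ideal).

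The main obstacle, and the point needing care, is the handling of the noncommutative multiplications hidden inside $\LSP$ and $\LNF$: forming $\LSP(g_i,g_j)$ and reducing by the $g_k$ requires left-multiplying polynomials by standard monomials, and in a $G$-algebra such a product $x^\gamma\cdot x^\delta$ is not a single monomial but expands, via the relations $x_jx_i=c_{ij}x_ix_j+d_{ij}$, into a standard polynomial whose further monomials depend on the $d_{ij}$. I must make sure $F$ is closed enough to capture \emph{all} these expansions that occur in the one run of Buchberger's test — i.e. every standard monomial produced while rewriting every product encountered — and argue that this closure is still finite because the whole test is a single terminating computation involving only finitely many such products. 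Once one is convinced that ``same leading exponents $+$ same order on $F$'' forces the rewriting-to-normal-form to be step-for-step identical, the rest is immediate; so I would spend the bulk of the write-up carefully specifying which monomials enter $F$ and verifying that each order comparison made by the algorithm (in S-polynomial formation, in selecting reducers, and in the monomial arithmetic of the $G$-algebra) is a comparison between two elements of $F$.
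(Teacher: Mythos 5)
Your proposal is correct and follows essentially the same route as the paper: take $F$ to be the set of all standard monomials of all elements arising during the reduction of the $\LSP(g_i,g_j)$ to zero modulo $\mathcal G$ with respect to $>$ (together with the monomials of the $g_i$ themselves), observe that any admissible $>_1$ agreeing with $>$ on $F$ reproduces the same reductions verbatim, and conclude via Theorem~\ref{BuchCrit}. Your extra care about the monomials produced by the noncommutative rewriting is a sensible elaboration of what the paper's terser phrase ``all standard monomials of all elements occurring during the reduction process'' already covers.
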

\begin{proof}
Let $F$ be the set of all standard monomials of all elements of $A$ occurring
during the reduction process of the $\LSP(g_i,g_j)$ to zero modulo $\mathcal G$ 
with respect to $>$. Then for any admissible ordering $>_1$ for $A$ which coincides 
with $>$ on $F$, the  $\LSP(g_i,g_j)$ also reduce to zero modulo $\mathcal G$ with 
respect to $>_1$. The result follows from Theorem \ref{BuchCrit}.
\end{proof}

The notion of a reduced left Gr\"obner basis is analogous to that in the commutative case, 
every left ideal of $A$ has a uniquely determined such basis, and this basis can 
be computed by a variant of Buchberger's algorithm. 

If $A$ is graded with respect to some weight vector $\omega$ as in Subsection
\ref{subsec: gradings}, and $>$ is an admissible ordering for $A$, then the 
induced ordering $>_\omega$ is also admissible for $A$. When computing
a Gr\"obner basis with respect to $>_\omega$, starting  from 
$\omega$-homogeneous elements, Buchberger' s algorithm will return 
Gr\"obner basis elements which are $\omega$-homogeneous as well.
In particular, reduced Gr\"obner bases consist of $\omega$-homogeneous
elements.

With regard to homogenizing and dehomogenizing Gr\"obner bases, 
the proposition known from the commutative case extends as follows:

\begin{proposition}\label{GBHomogenization} In the situation of 
\Cref{ReesConstruction},  let $I\subset A$ be a nonzero left ideal, and let 
$I^h \subset R^{\;\!\omega} \! A$ be its homo\-ge\-ni\-zation.  Then:
\begin{enumerate}
\item If $\mathcal G=\{g_1,\dots, g_s\}$ is a  Gr\"obner basis for $I$ with respect 
to  $>_{\omega}$, then $\mathcal G^h=\{g_1^h,\dots, g_s^h\}$ is a Gr\"obner basis 
for ${I^h}$ with respect to $>_{{\omega}}^h$\;  which consists of \;\!
${\widetilde{\omega}}$-homogeneous elements. 
\item Conversely, if \ $\mathcal G=\{G_1,\dots, G_s\}$ is a  Gr\"obner basis for $I^h$ 
with respect  to $>_{{\omega}}^h$\; which consists of \;\! ${\widetilde{\omega}}$-homogeneous 
elements, then \ \mbox{$\mathcal G\mid_{t=1}=\{G_1\mid_{t=1},\dots, G_s\mid_{t=1}\}$} is a 
Gr\"obner basis for $I$ with respect to  $>_{\omega}$.
\end{enumerate}
\end{proposition}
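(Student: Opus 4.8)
The plan is to prove both directions by combining Buchberger's Criterion (\Cref{BuchCrit}) with the explicit description of the Rees algebra in \Cref{ReesConstruction} and the characterization \eqref{equ:homog} of $I^h$. The key technical fact linking the two orderings is that homogenization is compatible with leading exponents: for an $\omega$-homogeneous ordering one has $\exp_{>_\omega^h}(f^h)=\exp_{>_\omega}(f)$ (viewed in $\N^{n+1}$ with last coordinate $\deg_\omega(f)-|\exp_\omega(f)|_\omega$, which for $\omega$-homogeneous $f$ is simply $\exp_{>_\omega}(f)$ with a zero appended). More generally, for $F\in R^\omega A$ one has $\exp_{>_\omega^h}(F)=\big(\exp_{>_\omega}(F\mid_{t=1}),\,e\big)$ for the appropriate $t$-exponent $e$, because $>_\omega^h$ refines the $\widetilde\omega$-degree by $>_\omega$ and dehomogenizing preserves the Newton diagram up to the $t$-coordinate.

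For part (1), I would first recall that since $A$ is filtered by the $\omega$-degree and $>_\omega$ is $\omega$-compatible, a Gröbner basis computation over $A$ with respect to $>_\omega$ and a Gröbner basis computation over $R^\omega A$ with respect to $>_\omega^h$ run "in parallel": the left S-polynomial $\LSP(g_i^h,g_j^h)$ in $R^\omega A$ is, up to a power of $t$, the homogenization of $\LSP(g_i,g_j)$ in $A$, and likewise each left reduction step upstairs dehomogenizes to a left reduction step downstairs. Hence if the $\LSP(g_i,g_j)$ reduce to zero modulo $\mathcal G$, then the $\LSP(g_i^h,g_j^h)$ reduce to zero modulo $\mathcal G^h$ (one has to track the extra $t$-powers, but since everything in sight is $\widetilde\omega$-homogeneous and $t$ is central, this is bookkeeping). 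By \Cref{BuchCrit}, $\mathcal G^h$ is then a left Gröbner basis; by construction its elements are $\widetilde\omega$-homogeneous and lie in $I^h$. It remains to check that $\langle \mathcal G^h\rangle=I^h$ rather than merely a subideal: here I would use \eqref{equ:homog}, namely $I^h=\langle f_1^h,\dots,f_r^h\rangle:t^\infty$ for any generators $f_i$ of $I$; applying this with the $f_i$ replaced by the $g_i$ and noting that $\mathcal G$ generates $I$, together with the fact that a Gröbner basis of $\langle g_1^h,\dots,g_s^h\rangle$ with $t$-saturated leading terms already generates the saturation, gives the equality. Alternatively, and more cleanly, one shows $\exp_{>_\omega^h}(I^h)=\bigcup_i(\exp_{>_\omega^h}(g_i^h)+\N^{n+1})$ directly from $\exp_{>_\omega^h}(f^h)=(\exp_{>_\omega}(f),0)$ applied to all $f\in I$ and the fact that $\exp(I)=\bigcup_i(\exp_{>_\omega}(g_i)+\N^n)$, plus the observation that any element of $I^h$ of the form $ft^d$ has leading exponent $(\exp_{>_\omega}(f), d-|\exp_\omega(f)|_\omega)$, which is divisible by some $(\exp_{>_\omega}(g_i),0)$.

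For part (2), the converse, I would run the argument in the opposite direction: given a $\widetilde\omega$-homogeneous Gröbner basis $\{G_1,\dots,G_s\}$ of $I^h$, dehomogenizing the Buchberger test shows $\LNF(\LSP(G_i\mid_{t=1},G_j\mid_{t=1}),\mathcal G\mid_{t=1})=0$ in $A$, so $\mathcal G\mid_{t=1}$ is a left Gröbner basis; and since $I=(I^h)\mid_{t=1}$ (dehomogenizing the generating set of $I^h$ recovers generators of $I$), the left ideal generated by $\mathcal G\mid_{t=1}$ is exactly $I$. Again one can argue instead on the level of leading exponents: $\exp_{>_\omega}(G_i\mid_{t=1})$ is obtained from $\exp_{>_\omega^h}(G_i)$ by deleting the last coordinate, and every $\alpha\in\exp_{>_\omega}(I)$ lifts to some $(\alpha,e)\in\exp_{>_\omega^h}(I^h)$, reducing the claim to the Gröbner property upstairs.

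The main obstacle I anticipate is the careful handling of the $t$-powers in the S-polynomial/reduction bookkeeping — specifically, verifying that the "parallel computation" really is parallel, i.e., that homogenizing a reduction step in $A$ and then possibly multiplying by a power of $t$ yields a legitimate reduction step in the graded $G$-algebra $R^\omega A$, and conversely. This requires using that $t$ is central in $R^\omega A$ (from \Cref{ReesConstruction}) so that multiplication by $t^d$ commutes past all reductions, and that $>_\omega^h$ is $\widetilde\omega$-compatible so that leading-term cancellations are preserved under homogenization. Everything else is a faithful translation of the commutative proof, using that the relevant structural results (Buchberger's criterion, existence of normal forms, \Cref{ReesConstruction}, \eqref{equ:homog}) have already been established for $G$-algebras.
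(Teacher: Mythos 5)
Your proposal is correct; the paper states this proposition without proof, presenting it as the direct extension of the known commutative statement, and your argument supplies exactly the standard details that are being invoked. The cleanest and fully rigorous route is your leading-exponent version, resting on the identity $\exp_{>_{\omega}^h}(ft^d)=\bigl(\exp_{>_{\omega}}(f),\,d-\deg_{\omega}(f)\bigr)$ for $f\in F_d^{\;\!\omega}A$ together with the fact that $I^h$ is graded, so it suffices to cover the leading exponents of its $\widetilde{\omega}$-homogeneous elements $ft^d$ with $f\in I$; this bypasses the $t$-power bookkeeping in the S-polynomial route and the saturation step entirely.
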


Part (1) of the proposition will be of theoretical use for establishing our 
modular algorithm. From a practical point of view, as already pointed out, we may 
wish to verify the correctness of the Gr\"obner basis returned by the modular 
algorithm by working in a homogenized situation (provided this is computationally
feasible). One possible approach for this is to compute $I^h$ using formula
\eqref{equ:homog}, and apply part (2) of the proposition. We describe a second 
approach which does not require to compute the saturation and is more flexible
with regard to monomial orderings.
For this, given $>$ and a vector $\omega\in\N^n_{\geq 1}$ 
as in Subsection \ref{subsec: homogenization}, consider the monomial ordering 
$>^h$ on $\langle \tilde{x},t\rangle$ defined by 
\[
(\alpha, d) >^h (\beta, e)
\Longleftrightarrow
\left\{
\begin{array}{lcl}
|(\alpha,d)|_{\widetilde{\omega}}  >  |(\beta, e)|_{\widetilde{\omega}}\\
\text{ or}\\
|(\alpha,d)|_{\widetilde{\omega}}  =  |(\beta, e)|_{\widetilde{\omega}}\ \text{ and } \ \alpha > \beta\ .
\end{array}\right .
\]
This ordering is admissible for $R^{\;\!\omega} \! A$ and we have:
\begin{proposition} 
In the situation above, let  $I=\langle f_1,\ldots,f_r\rangle \subset A$ 
be a nonzero left ideal, and let $J=\langle f_1^h,\ldots,f_r^h\rangle \subset R^{\;\!\omega} \! A$. 
If $\mathcal{G}$ is a Gr\"obner basis for $J$ with respect to $>^h$\; which consists of \
$\widetilde{\omega}$-homogeneous elements, then $\mathcal{G}\mid_{t=1}$ is a 
Gr\"obner basis for $I$ with respect to $>$.   
\end{proposition}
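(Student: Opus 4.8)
The plan is to reduce the statement to the already-established Proposition \ref{GBHomogenization}(2) by comparing the ideal $J=\langle f_1^h,\ldots,f_r^h\rangle$ with the homogenization $I^h$. The key observation is that, by formula \eqref{equ:homog}, we have $I^h = J : t^\infty$, so $J$ and $I^h$ differ only by the $t$-saturation. Since dehomogenization sets $t=1$, this difference will turn out to be invisible at the level of dehomogenized ideals. So the first step is to record that $J\subseteq I^h$, that $I^h$ is graded with respect to $\widetilde\omega$ (Proposition \ref{GBHomogenization} together with the grading discussion in Subsection \ref{sec:GB}), and that dehomogenizing any generating set of $I^h$ yields $I$, since $(f^h)\mid_{t=1}=f$ and the saturation only multiplies/divides by powers of $t$.

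Second, I would show that $\langle\mathcal G\mid_{t=1}\rangle = I$ as left ideals of $A$. The inclusion $\langle\mathcal G\mid_{t=1}\rangle\subseteq I$ holds because each $G_k\in J\subseteq I^h$, and dehomogenizing $I^h$ lands in $I$. For the reverse inclusion, each $f_i$ equals $f_i^h\mid_{t=1}$, and $f_i^h\in J=\langle\mathcal G\rangle$; writing $f_i^h=\sum_k a_k G_k$ in $R^\omega\!A$ and setting $t=1$ (using that $\mid_{t=1}$ is the $K$-algebra homomorphism $R^\omega\!A\to A$ induced by $t\mapsto 1$, which is compatible with left multiplication because $t$ is central) gives $f_i=\sum_k (a_k\mid_{t=1})(G_k\mid_{t=1})\in\langle\mathcal G\mid_{t=1}\rangle$. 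Hence the two left ideals coincide.

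Third — the heart of the argument — I would prove $\exp_{>}(I) = \bigcup_{k}\bigl(\exp_{>}(G_k\mid_{t=1})+\N^n\bigr)$, i.e.\ that $\mathcal G\mid_{t=1}$ is actually a Gr\"obner basis and not merely a generating set. The mechanism is the relationship between $>^h$ on $\langle\tilde x,t\rangle$ and $>$ on $\langle x\rangle$: for a $\widetilde\omega$-homogeneous $G\in R^\omega\!A$, the leading exponent $\exp_{>^h}(G)$ has the form $(\alpha,e)$ where $\alpha=\exp_{>}(G\mid_{t=1})$ — because among monomials of $G$ of equal $\widetilde\omega$-degree, $>^h$ breaks ties by $>$ on the $x$-part, and dehomogenizing is a bijection between the terms of $G$ and the terms of $G\mid_{t=1}$ that preserves the $x$-exponent. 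Thus $\exp_{>^h}(J)$ projects onto $\exp_{>}(I)$ under $(\alpha,e)\mapsto\alpha$. Now, since $\mathcal G$ is a Gr\"obner basis of $J$, $\exp_{>^h}(J)=\bigcup_k(\exp_{>^h}(G_k)+\N^{n+1})$; projecting away the last coordinate and using the displayed form of $\exp_{>^h}(G_k)$ yields exactly $\exp_{>}(I)=\bigcup_k(\exp_{>}(G_k\mid_{t=1})+\N^n)$, which is the Gr\"obner basis condition. I would also note that this forces the $G_k\mid_{t=1}$ to be nonzero (their leading exponents are honest elements of $\N^n$), so $\mathcal G\mid_{t=1}\subseteq A\setminus\{0\}$ as required by the definition.

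The main obstacle I anticipate is the third step, specifically verifying that projection of $\exp_{>^h}(J)$ onto the first $n$ coordinates equals $\exp_{>}(I)$: the inclusion "$\subseteq$" is immediate from $J\subseteq I^h$ and the leading-term compatibility, but "$\supseteq$" requires knowing that every $f\in I$ can be matched by some $F\in J$ with the same $x$-leading exponent. This is where one genuinely uses $J$ rather than an arbitrary subideal: given $f\in I$, write $f=\sum b_i f_i$ in $A$ (possible since $I=\langle f_1,\dots,f_r\rangle$), homogenize to get $F=\sum b_i^h f_i^h\cdot t^{(\cdots)}\in J$ with $F\mid_{t=1}=f$, and check, using the admissibility of $>^h$ and the form of $\widetilde\omega=(\omega,1)$, that $\exp_{>}(F\mid_{t=1})=\exp_{>}(f)$ — equivalently, that homogenization with respect to $\widetilde\omega$ does not destroy the leading term. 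Once this is in hand, everything else is bookkeeping with the definitions.
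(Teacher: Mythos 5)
Your proposal is correct and follows essentially the same route as the paper's proof: show $\mathcal G\mid_{t=1}\subset I$, then for each nonzero $f\in I$ exhibit a $\widetilde\omega$-homogeneous element of $J$ (the paper takes $f^ht^e$ via formula \eqref{equ:homog}, you construct $\sum b_i^h f_i^h t^{(\cdots)}$ from a representation of $f$, which amounts to the same element), and use the compatibility of $\exp_{>^h}$ with $\exp_>$ under dehomogenization of homogeneous elements. You are more explicit than the paper about that compatibility lemma, and the ideal equality $\langle\mathcal G\mid_{t=1}\rangle=I$ in your second step is a harmless extra not needed for the Gr\"obner basis conclusion.
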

\begin{proof} Since dehomogenizing $J$ gives us back $I$, we have $\mathcal{G}\mid_{t=1}\subset I$. 

Let $f\in I$ be any nonzero element. Then $f^h\in I^h$, and we conclude from formula \eqref{equ:homog}
that there is an integer $e\in \N$ such that $f^h t^e \in J$. Since $\mathcal{G}$ is a Gr\"obner basis 
for $J$  with respect to $>^h$, there is an element $G\in \mathcal{G}$ such that $\exp_{>^h}(f^h t^e)=
\exp_{>^h}(G)+({\beta}, \widetilde{e})$ for some ${\beta}\in \N^{n}$ and some $\widetilde{e}\in\N$.
Then $\exp_{>}(f)=\exp_{>}(G\mid_{t=1})+\beta$,
which proves the proposition.
\end{proof}

In Subsection \ref{subsect:BSP}, we will consider particular instances of computing 
Bernstein-Sato polynomials to compare the performance of our modular 
algorithm with that of other versions of Buchberger's algorithm. Such computations
require the elimination of variables.
\begin{definition} Fix a subset $\sigma\subset \{1,\dots, n\}$, write $x_\sigma$ 
for the set of variables $x_i$ with $i\in\sigma$, and let $A_\sigma$ be the $K$-linear 
subspace of $A$ which is generated by the standard monomials in $\langle x_\sigma \rangle$. 
An \emph{elimination ordering}\ for $x\smallsetminus x_\sigma$ is an admissible ordering for 
$A$ such that
$$
f \in A\setminus \{0\},\ x^{\exp(f)}\in \langle x_\sigma \rangle  \ \text{ implies } \ f\in A_\sigma.
$$
\end{definition} 
\noindent
Suppose now that an elimination ordering $>$ as above exists. Then
$d_{ij}\in A_\sigma$ for each pair of indices $1\leq i<j \leq n$ with $i,j\in\sigma$. 
Furthermore, $A_\sigma$ is a subalgebra of $A$ with admissible ordering 
$>_\sigma$, where $>_\sigma$ is the restriction of $>$ to the set of standard monomials in 
$\langle x_\sigma\rangle$. Finally, if $I\subset A$ is a nonzero left ideal, and $\mathcal G$ 
is a Gr\"obner basis for $I$ with respect to $>$, then $\mathcal G\cap A_\sigma$ is a  
Gr\"obner basis for $I\cap A_\sigma$ with respect to $>_\sigma$. 

Note that in general, an elimination ordering for $x\smallsetminus x_\sigma$ may not exist. 
In practical terms, the question is whether the following linear programming
problem has a solution (see \cite{MR2503959}): 

${\it minimize} \ \displaystyle \sum_{i=1}^n \omega_i$ \ \textit{ subject to }\\

\begin{itemize}
\item $\omega_i + \omega_j \geq \vert \alpha \vert_{\omega}   
\ \! \text{ for } \ 1\leq i < j \leq n \ \text{ and all }  \ \alpha \in \mathcal N(d_{ij}),$
\item  $\omega_i = 0 \ \textit{ for } \ i\in\sigma\ \text{ and }\ \omega_i > 0 \ 
\textit{ for } \ i\in \{1,\dots, n\}\setminus \sigma$.\\
\end{itemize}

\noindent
If there is a solution $\omega$, and $>$ is any admissible ordering for $A$, then
$>_\omega$ is an elimination ordering for $x\smallsetminus x_\sigma$.
For computing Bernstein-Sato polynomials as discussed in Subsection 
\ref{subsect:BSP}, appropriate block orderings will do.

\begin{remark} 
\label{rem:right-two-sided}
The definition of a \emph{right Gr\"obner basis} is completely
analogous to that of left Gr\"obner basis, while a \emph{two-sided Gr\"obner
basis} is a left Gr\"obner basis $\mathcal G$ satisfying ${}_A\langle \mathcal G\rangle
={}_A\langle \mathcal G\rangle_A$. Having implemented means for
computing left Gr\"obner bases, right and two-sided Gr\"obner bases
are obtained by computing left Gr\"obner bases in the opposite algebra
$A^{\emph{opp}}$ and the enveloping algebra $A^{\emph{env}} =A\otimes_KA^{\emph{opp}}$, 
respectively. See \cite{vikThesis, Roman} for details.
\end{remark}
 
Rather than restricting ourselves just to $G$-algebras, we should finally point out that
the use of Gr\"obner bases as discussed above allows for an effective computationally 
treatment of a more general class of algebras:

\begin{remark}[$GR$-Algebras] A \emph{$GR$-algebra} is the quotient $A/J$ of a $G$-algebra 
$A$ by a two-sided ideal $J \subset A$. Having implemented $A$ in a computer algebra system such as 
{\sc{Singular}}, we can implement $A/J$ by computing a \emph{two-sided Gr\"obner basis} for $J$.
\end{remark}

\section{A Modular Gr\"obner Basis Algorithm for \PBW-Algebras}
\label{sec:modgb}

In this section, we extend the modular Gr\"obner bases algorithm from
commutative polynomial rings~\cite{A, elbert83, pfister2011}
to $G$-algebras. As before, we focus on left Gr\"obner bases.
By Remark \ref{rem:right-two-sided}, however, the algorithm 
presented below also gives modular ways of computing right 
and two-sided Gr\"obner bases of ideals (and modules).

Fix a $G$-algebra $A=\Q\langle x_1,\dots, x_n \mid C\rangle$ whose 
commutation relations 
$$
C: \ x_jx_i = c_{ij} \cdot x_ix_j+d_{ij},  \ 1\leq i<j \leq n,
$$ 
involve integer coefficients only, and
a monomial ordering on $\langle x \rangle$ which is admissible for $A$. 
Write $A_0$ for the subring of $A$ formed by the elements with integer 
coefficients. That is, $A_0$ is obtained from  the free associative $\mathbb{Z}$-algebra
$\mathbb{Z}\langle x_{1},\ldots,x_{n}\rangle$ by imposing the 
commutation relations $C$:
$$
A_0=\mathbb{Z}\langle x_{1},\ldots,x_{n} \mid C\rangle.
$$
 Similarly, if $N\geq 2$ is an integer which 
\begin{equation}
\label{cond:star}
\text{does neither divide any }  c_{ij} \text{ nor any coefficient of any }  d_{ij},
\end{equation}
then write $A_N=\mathbb{Z}/N\mathbb{Z}\langle x_{1},\ldots,x_{n}
\mid C_N\rangle$, where $C_N$ is obtained from $C$ by reducing 
the $c_{ij}$ and the  coefficients of the $d_{ij}$ modulo $N$. Note that 
if $p$ is a prime satisfying \eqref{cond:star}, then $A_p$ is a 
$G$-algebra over the finite field $\mathbb F_p$.

If $\frac{a}{b}\in\mathbb{Q}$ with $\gcd(a,b)=1$ and $\gcd(b,N)=1$, set
$$\left(  \frac{a}{b}\right)  _{N}:=(a+N\mathbb{Z})(b+N\mathbb{Z})^{-1}%
\in\mathbb{Z}/N\mathbb{Z}.$$
 If $f\in A$ is an element such that $N$ is
coprime to the denominator of any coefficient of $f$, then its \emph{reduction
modulo $N$} is the element $f_{N}\in A_{N}$ obtained by mapping each
coefficient $c$ of $f$ to $c_{N}$. If $\mathcal H=\{h_{1},\dots,h_{t}\}\subset A$ is a
set of elements such that $N$ is coprime to the denominator of any coefficient 
of a coefficient of any $h_{i}$, set $\mathcal H_{N}=\{(h_{1})_{N},\dots,(h_{t})_{N}\}\subset A_N$. 

Let $I\subset A$ be a nonzero left ideal. We will explain how to compute a left
Gr\"obner basis for $I$ using modular methods. For this, we write
\[
I_{0}=I\cap A_0\;\text{ and }\;I_{N}=\left\langle f_{N}\mid f\in
I_{0}\right\rangle \subset A_{N}\text{,}%
\]
and call $I_{N}$ \emph{the reduction of $I$ modulo $N$}. 
We will rely on the following result:
\begin{lemma}
\label{lemma:realiting-ideals mod-p}
With notation as above, fix a set of generators $f_1, \dots, f_r$ for $I$ with coefficients 
in $\mathbb Z$. Then for all but finitely many primes $p$, the reduction $I_p$ is generated 
by the reductions of the $f_j$. That is, the ideal
$$
\widetilde{I}_p ={}_{A_p}\langle (f_1)_p, \dots, (f_r)_p \rangle
$$
coincides with $I_p$ for all but finitely many primes $p$.

\end{lemma}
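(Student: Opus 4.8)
The inclusion $\widetilde{I}_p\subseteq I_p$ holds for \emph{every} prime $p$ satisfying \eqref{cond:star}: since each $f_j$ has integer coefficients and lies in $I$, we have $f_j\in I_0$, so each $(f_j)_p$ is one of the reductions generating $I_p$. Hence the whole content of the lemma is the reverse inclusion $I_p\subseteq\widetilde I_p$ for all but finitely many $p$, and the plan is to deduce it from a single Gr\"obner basis computation over $\Q$ together with a localization argument that controls denominators uniformly.

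First I would fix the reduced (hence monic) Gr\"obner basis $\mathcal G=\{g_1,\dots,g_s\}$ of $I$ with respect to the given admissible ordering, and fix expressions $f_j=\sum_i h_{ji}g_i$ (possible since $f_j\in I=\langle\mathcal G\rangle$) and $g_i=\sum_j a_{ij}f_j$ (possible since $g_i\in I=\langle f_1,\dots,f_r\rangle$) with all $h_{ji},a_{ij}\in A$. Let $S$ be the set of primes which either violate \eqref{cond:star}, or divide the denominator of some coefficient occurring in one of the finitely many elements $g_i$, $h_{ji}$, $a_{ij}$; this $S$ is finite. For $p\notin S$, write $\Z_{(p)}$ for the localization of $\Z$ at $p$ and set $A_{0,(p)}=\Z_{(p)}\langle x_1,\dots,x_n\mid C\rangle$. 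Because the $c_{ij}$ and the coefficients of the $d_{ij}$ are integers, products of standard monomials expand in the PBW basis of $A_0$ with integer coefficients, so reduction of coefficients modulo $p$ is a surjective ring homomorphism $\pi\colon A_{0,(p)}\to A_p$ (here \eqref{cond:star} is exactly what makes $A_p$ a $G$-algebra over $\mathbb{F}_p$ with the same structure constants reduced mod $p$). Since $g_i,h_{ji},a_{ij}\in A_{0,(p)}$, applying $\pi$ to the two families of expressions above gives $(f_j)_p\in{}_{A_p}\langle(g_1)_p,\dots,(g_s)_p\rangle$ and $(g_i)_p\in\widetilde I_p$, whence $\widetilde I_p={}_{A_p}\langle(g_1)_p,\dots,(g_s)_p\rangle$.

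It then remains to prove $I_p\subseteq\widetilde I_p$ for $p\notin S$. I would take an arbitrary $g\in I_0\subseteq A_{0,(p)}$ and run left division of $g$ by $\mathcal G$. Since $\mathcal G$ is a Gr\"obner basis and $g\in I$, we have $\LNF(g,\mathcal G)=0$, so the algorithm produces $g=\sum_i q_ig_i$ with cofactors $q_i\in A$. The point is that over $\Z_{(p)}$ these cofactors stay integral: each coefficient created during a reduction step is obtained by dividing a coefficient of the current remainder by $\lc(g_i)=1$ times the scalar coming from the commutation relations when $x^\mu$ is moved past $x^{\exp(g_i)}$, and that scalar is a monomial in the $c_{ij}$, hence a unit in $\Z_{(p)}$ for $p\notin S$; by induction on the (strictly decreasing) leading exponents all $q_i\in A_{0,(p)}$. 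Applying $\pi$ yields $g_p=\sum_i(q_i)_p(g_i)_p\in\widetilde I_p$. As $g$ ranges over a generating set of $I_p$, this gives $I_p\subseteq\widetilde I_p$, and combined with the first paragraph, $I_p=\widetilde I_p$ for all $p\notin S$.

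The one genuinely delicate point, and the step I expect to cost the most care, is the claim that the division cofactors remain in $A_{0,(p)}$ uniformly in $g$ — equivalently, that $I\cap A_{0,(p)}=\sum_i A_{0,(p)}g_i$ for $p\notin S$. This is where noncommutativity enters, since in a $G$-algebra a reduction step multiplies $g_i$ by a term whose coefficient is a product of $c_{ij}$'s, and one must argue this extra scalar is a $p$-unit, which is precisely what \eqref{cond:star} delivers after discarding the finitely many primes dividing some $c_{ij}$. Everything else — that $\widetilde I_p\subseteq I_p$, that $\pi$ is a ring homomorphism, and the bookkeeping of denominators in the fixed finite list of auxiliary expressions — is routine and only enlarges $S$ by finitely many primes.
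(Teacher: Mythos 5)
Your proof is correct, but it reaches the reverse inclusion $I_p\subseteq\widetilde I_p$ by a genuinely different mechanism than the paper. The paper's proof is much shorter: it takes a finite set of generators $g_1,\dots,g_s$ of the left ideal $I_0=I\cap A_0$ over $A_0$ (implicitly invoking that $A_0=\Z\langle x\mid C\rangle$ is left Noetherian), writes each $g_i=\sum_j c_{ij}f_j$ with $c_{ij}\in A$, and clears denominators to obtain one integer $d$ with $d\cdot g_i\in{}_{A_0}\langle f_1,\dots,f_r\rangle$; any prime not dividing $d$ then works. You avoid any appeal to finite generation of $I_0$ over $A_0$: instead you divide an arbitrary $g\in I_0$ by the reduced Gr\"obner basis $\mathcal G(0)$ of $I$ over $\Q$ and argue that the cofactors stay $p$-integral, because the reduced basis is monic and the scalars introduced by the commutation relations are products of the $c_{ij}$, which are $p$-units for $p$ satisfying condition \eqref{cond:star}. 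That division analysis is the delicate step you flag, and it is sound (the leading exponent of the remainder strictly decreases, and each cancellation divides only by $\lc(g_i)=1$ times a product of $c_{ij}$'s). What your route buys is self-containedness — no Noetherianity of the integral form $A_0$ is needed, and the bad primes are explicitly those dividing denominators in a fixed finite list of data — at the cost of a longer argument; what the paper's route buys is brevity, at the cost of leaving the Noetherianity of $A_0$ unjustified. Both are valid proofs of the lemma.
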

\begin{proof}  
Let $g_1,\dots, g_s$ be a set of generators for the left ideal $I_0=I\cap A_0$.
Then each $g_i$ has a representation of type $g_i =\sum_{j=1}^r c_{ij} f_j $, with 
elements $c_{ij}\in A$. Clearing denominators in the coefficients
of the $c_{ij}$, we get a non-zero integer $d$ such that 
$d\cdot g_i \in {}_{A_0}\langle f_1, \dots, f_r \rangle$ for all $i$.
That is, 
$$
I_0={}_{A_0}\langle f_1, \dots, f_r \rangle : d.
$$
Then $I_p=\widetilde{I}_p$ for each prime $p$ which does not divide $d$.
The result follows.
\end{proof}

\begin{remark}
\label{rem:reduction-practical} When running our modular algorithm,
we will fix a set of generators $f_{1}, \ldots,f_{r}$ for $I$ as in
Lemma \ref{lemma:realiting-ideals mod-p}, and reject a prime $p$ if it does
not fulfil condition \eqref{cond:star}. 
If condition \eqref{cond:star} is fulfilled, we work with the left ideal $\widetilde{I}_{p}$
rather than with $I_p$. The  finitely many primes $p$ for which $\widetilde{I}_{p}$ and
$I_p$ differ will not influence  the final result if we use error tolerant rational reconstruction 
as introduced in~\cite{bdfp} and discussed below. 
\end{remark}
\noindent
In the following discussion, for simplicity of the presentation, we will ignore the  primes 
which do not fulfil condition \eqref{cond:star}.  
We will write $\mathcal G(0)$ for the reduced Gr\"{o}bner basis of $I$, and $\mathcal G(p)$ for that 
of $\tilde{I}_p$. The basic idea of the modular Gr\"obner basis algorithm is then as
follows: First, choose a finite set of primes $\mathcal{P}$ and 
compute $\mathcal G(p)$ for each $p\in\mathcal{P}$. Second, lift the $\mathcal G(p)$ 
coefficientwise to a set of elements $\mathcal G\subset A$. We then expect that $\mathcal G$
is a Gr\"{o}bner basis which coincides with our target Gr\"{o}bner basis
$\mathcal G(0)$.

The lifting process consists of two steps. First, use Chinese remaindering to
lift the $\mathcal G(p)\subset A_{p}$ to a set of elements $\mathcal G(N)\subset A_{N}$, 
with $N:=\prod_{p\in\mathcal{P}}p$\ . Second, compute a set of elements
$\mathcal G\subset A$ by lifting the coefficients occurring in $\mathcal G(N)$ 
to rational coefficients. Here, to identify Gr\"obner basis elements corresponding 
to each other, we require that $\exp(\mathcal G(p)) =
\exp(\mathcal G(q))$ for all $p,q\in\mathcal{P}$. 
This leads to the second condition in the definition below:

\begin{definition}\label{luckyP}
With notation as above, a prime $p$ is called \emph{lucky} if
\begin{enumerate}
\item [\emph{(L1)}] $I_{p}=\tilde{I}_{p}$ and
\item [\emph{(L2)}]$\exp(\mathcal G(0))=\exp(\mathcal G(p))$.
\end{enumerate}
\noindent
Otherwise, $p$ is called \emph{unlucky}.
\end{definition}   

\begin{lemma}
\label{lem finite} The set of unlucky primes is finite.
\end{lemma}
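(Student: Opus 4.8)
The plan is to handle the two failure conditions (L1) and (L2) separately, showing each excludes only finitely many primes. Condition (L1) is already essentially handled: by Lemma~\ref{lemma:realiting-ideals mod-p}, after fixing the integer generators $f_1,\dots,f_r$ of $I$, there is a nonzero integer $d$ with $I_0={}_{A_0}\langle f_1,\dots,f_r\rangle:d$, so $I_p=\widetilde I_p$ for every prime $p$ not dividing $d$; hence (L1) fails for only finitely many primes. It remains to bound the primes violating (L2), which is the main obstacle.

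For (L2), I would argue as in the commutative case. Let $\mathcal G(0)=\{g_1,\dots,g_s\}$ be the reduced Gr\"obner basis of $I$ with respect to the fixed admissible ordering $>$, and after clearing denominators assume each $g_i\in A_0$ with content $1$. For all but finitely many primes $p$ the reductions $(g_i)_p$ are nonzero, have $\exp_{>}((g_i)_p)=\exp_{>}(g_i)$ (this excludes the primes dividing the leading coefficients), and $(g_1)_p,\dots,(g_s)_p$ still lie in $\widetilde I_p=I_p$ (true whenever $p\nmid d$, since the $g_i\in I_0$ and a denominator-clearing argument as in Lemma~\ref{lemma:realiting-ideals mod-p} puts $d\cdot g_i$ in ${}_{A_0}\langle f_1,\dots,f_r\rangle$). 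Now run Buchberger's algorithm (Theorem~\ref{BuchCrit}) on $\mathcal G(0)$ over $\Q$: since $\mathcal G(0)$ is a Gr\"obner basis, every $\LSP(g_i,g_j)$ reduces to zero modulo $\mathcal G(0)$ over $\Q$. Collect all coefficients of all standard polynomials in $A$ occurring during these finitely many reduction sequences (this is the finite set behind Corollary~\ref{Finite_det}, now tracked on the coefficient level rather than the monomial level), and let the finitely many ``bad'' primes be those dividing one of these denominators or one of the relevant leading coefficients. For any prime $p$ outside this finite bad set together with the divisors of $d$, the very same reduction steps are valid modulo $p$: each $\LSP((g_i)_p,(g_j)_p)$ reduces to zero modulo $\{(g_1)_p,\dots,(g_s)_p\}$ using Theorem~\ref{BuchCrit}, so $\{(g_1)_p,\dots,(g_s)_p\}$ is a Gr\"obner basis of the ideal it generates, which is contained in $I_p$. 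Thus $\exp(I_p)\supseteq\bigcup_i(\exp_{>}(g_i)+\N^n)=\exp(I)$.

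For the reverse inclusion $\exp(I_p)\subseteq\exp(I)$, I would use Macaulay's basis theorem (Remark~\ref{rem:Macaulay}) and a dimension count, exactly as in the commutative setting: the standard monomials $x^\alpha$ with $\alpha\in\N^n\setminus\exp(I)$ map to a set that spans $A_p/I_p$ over $\F_p$ (because $\{(g_i)_p\}\subset I_p$ already rewrites every leading monomial in $\exp(I)$), while over $\Q$ these same standard monomials form a basis of $A/I$. If some $\alpha_0\in\exp(I_p)\setminus\exp(I)$ existed, then $x^{\alpha_0}$ together with the $x^\alpha$, $\alpha\notin\exp(I)$, would still span $A_p/I_p$ but would be $\F_p$-linearly dependent modulo $I_p$; tracking this relation back to a $\Z$-linear relation (after possibly shrinking the prime set by finitely many more primes controlling denominators of a chosen witness) would contradict the $\Q$-linear independence of the standard basis of $A/I$. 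Hence $\exp(I_p)=\exp(I)$; since the reduced Gr\"obner basis is uniquely determined by its set of leading exponents together with $I_p$, this gives $\exp(\mathcal G(p))=\exp(I_p)=\exp(I)=\exp(\mathcal G(0))$, i.e.\ (L2) holds.

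The main obstacle is the bookkeeping for (L2): making precise that ``finitely many primes'' simultaneously preserves nonvanishing of leading coefficients, validity of all the $\LSP$-reduction sequences modulo $p$ (this is where the $G$-algebra commutation relations enter, but condition~\eqref{cond:star} already ensures the $c_{ij},d_{ij}$ reduce well, and Corollary~\ref{Finite_det}'s finite set $F$ is what guarantees the reductions proceed identically), and the spanning/independence comparison for the Macaulay bases over $\Q$ and over $\F_p$. None of these steps is deep; each merely excludes the prime divisors of finitely many explicit nonzero integers, and a finite union of finite sets is finite. I would close by remarking that the argument is verbatim the commutative one of \cite{A, pfister2011}, the only new input being that Buchberger's criterion, normal form reduction, and Macaulay's theorem all hold for $G$-algebras (Theorem~\ref{BuchCrit}, Remark~\ref{rem:Macaulay}), and that reduction modulo a prime satisfying~\eqref{cond:star} is a ring homomorphism $A_0\to A_p$ compatible with the commutation relations.
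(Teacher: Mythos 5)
Your handling of (L1) and of the forward inclusion $\exp(I)\subseteq\exp(I_p)$ matches the paper's proof: exclude the divisors of the integer $d$ from Lemma~\ref{lemma:realiting-ideals mod-p}, and exclude the primes dividing a denominator occurring in the Buchberger-criterion reductions for $\mathcal G(0)$, so that $\mathcal G(0)_p$ is a Gr\"obner basis of the left ideal it generates inside $\tilde{I}_p$. The paper's own proof essentially stops there, so up to this point you are on the intended track and in fact more explicit.

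The gap is in your reverse inclusion $\exp(I_p)\subseteq\exp(I)$. Your Macaulay/spanning argument produces, from a hypothetical $\alpha_0\in\exp(I_p)\setminus\exp(I)$, a nonzero $h\in I_p$ whose Newton diagram avoids $\exp(I)$, and you propose to ``track this relation back to a $\Z$-linear relation'' contradicting $\Q$-linear independence modulo $I$. That lift does not exist in general: $h\in I_p=\tilde{I}_p$ only means $h=\sum_j (c_j)_p (f_j)_p$ in $A_p$, and an arbitrary preimage of this identity in $A_0$ differs from an element of $I_0$ by something in $pA_0$, so no contradiction with independence over $\Q$ follows; indeed, for a genuinely unlucky prime such an $h$ does exist, so the step is circular. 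Note also that a dimension count is unavailable here since $A/I$ is infinite-dimensional in the non-graded case (the Hilbert-function comparison of \cite{A} is reserved for the graded verification in Theorem~\ref{hom}). The standard repair, which the paper's terse proof implicitly subsumes, is to enlarge the excluded set by the primes dividing denominators in fixed representations $f_j=\sum_i a_{ji}g_i$ with $g_i\in\mathcal G(0)$ (equivalently, in the reductions $\LNF(f_j,\mathcal G(0))=0$). For the remaining primes one gets $(f_j)_p\in\langle\mathcal G(0)_p\rangle$, hence $\tilde{I}_p=\langle\mathcal G(0)_p\rangle$, hence $\exp(I_p)=\bigcup_i(\exp(g_i)+\N^n)=\exp(I)$ and $\mathcal G(0)_p=\mathcal G(p)$ directly, with no Macaulay argument needed.
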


\begin{proof}
By Lemma \ref{lemma:realiting-ideals mod-p}, $I_{p}=\tilde{I}_{p}$ for all but 
finitely many primes p. Given such a $p$, we have $\exp(\mathcal G(0))=
\exp(\mathcal G(p))$ if $p$ does not divide the denominator 
of any coefficient of any element of $A$ occurring when testing whether $\mathcal G(0)$ 
is a Gr\"{o}bner basis using Buchberger's criterion. The result follows.
\end{proof}

\begin{lemma}
\label{lemma:L2-cond}
If $p$ is a prime satisfying condition \emph{(L2)}, then $\mathcal G(0)_p=\mathcal G(p)$.
\end{lemma}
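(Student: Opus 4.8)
The plan is to compare two reduced Gröbner bases of the ideal $\widetilde{I}_p$ over the field $\mathbb F_p$: on the one hand $\mathcal G(p)$ by definition, and on the other hand the reduction $\mathcal G(0)_p$, which I claim is also a reduced Gröbner basis of $\widetilde{I}_p$ once condition (L2) holds. Since reduced Gröbner bases (with respect to a fixed admissible ordering) are unique, the two must coincide, giving the assertion.

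First I would observe that $\mathcal G(0)$ consists of elements of $I$ whose coefficients have denominators coprime to $p$ — this needs a short argument. Indeed, enlarging the set of primes we discard (finitely many, hence harmless in the sense of Remark \ref{rem:reduction-practical}), or more directly: if $p$ divided such a denominator, then reduction of $\mathcal G(0)$ mod $p$ would not be defined; but in fact for the statement we may assume $p$ is not among the finitely many primes obstructing reduction, since (L2) already presupposes $\mathcal G(p)$ and $\exp(\mathcal G(0))$ are being compared and the whole modular setup in Remark \ref{rem:reduction-practical} restricts to primes fulfilling \eqref{cond:star} and not dividing the relevant denominators. So $\mathcal G(0)_p$ is a well-defined subset of $A_p$. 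Next, reducing the equations $g_i = \sum_j c_{ij} f_j$ (expressing $\mathcal G(0)$ in terms of the generators $f_1,\dots,f_r$ of $I_0$) and, conversely, reducing the equations expressing each $f_j$ in terms of $\mathcal G(0)$, shows that $\mathcal G(0)_p$ generates $\widetilde{I}_p = {}_{A_p}\langle (f_1)_p,\dots,(f_r)_p\rangle$ — here I use that each $g_i\in I_0$ is an $A_0$-combination of the $f_j$ after clearing a denominator coprime to $p$, exactly as in the proof of Lemma \ref{lemma:realiting-ideals mod-p}, and that $\mathcal G(0)$ is a Gröbner basis of $I$ hence contains generators of $I_0$.

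Now I would use condition (L2). For each $g_i\in\mathcal G(0)$, reduction mod $p$ preserves the leading term provided $p$ does not divide the leading coefficient; after scaling each $g_i$ so its leading coefficient is $1$ (the reduced Gröbner basis is monic), this is automatic. Thus $\exp_{>}\big((g_i)_p\big)=\exp_{>}(g_i)$ for all $i$, so $\exp(\mathcal G(0)_p)=\exp(\mathcal G(0))$. By (L2) this equals $\exp(\mathcal G(p))=\exp(\widetilde{I}_p)$. Therefore the monoideal generated by the leading exponents of $\mathcal G(0)_p$ equals $\exp(\widetilde{I}_p)$, which by definition makes $\mathcal G(0)_p$ a Gröbner basis of $\widetilde{I}_p$. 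Finally, $\mathcal G(0)_p$ is reduced: it is monic by construction, and the "no leading term of one element divides any monomial of another" condition, together with the specific normal-form property of a reduced basis, is inherited under reduction because (L2) guarantees the leading exponents are unchanged and the remaining monomials of each $(g_i)_p$ form a subset of (the reductions of) those of $g_i$, hence still lie outside $\bigcup_k (\exp(g_k)+\N^n)$. Hence $\mathcal G(0)_p$ is the reduced Gröbner basis of $\widetilde{I}_p$, which is $\mathcal G(p)$ by definition, and we conclude $\mathcal G(0)_p=\mathcal G(p)$.

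The main obstacle is bookkeeping about which primes are being excluded and verifying that the reducedness property survives reduction. The "Gröbner basis" half is clean once (L2) is invoked; the delicate point is that a reduced basis stays reduced mod $p$, for which the equality of leading exponents from (L2) is precisely what is needed — without it, a tail monomial of some $(g_i)_p$ could accidentally become divisible by a leading exponent of another element. I would phrase the argument so that this dependence on (L2) is explicit.
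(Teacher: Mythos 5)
Your strategy---show that $\mathcal G(0)_p$ is itself the reduced Gr\"obner basis of $\widetilde I_p$ and invoke uniqueness of reduced bases---is a genuinely different route from the paper, which reduces to the graded case via the Rees algebra (Theorem \ref{ReesConstruction}, Proposition \ref{GBHomogenization}) and then appeals to Arnold's Hilbert-function argument. However, your version has a genuine gap at its first substantive step, the containment $\mathcal G(0)_p\subseteq\widetilde I_p$. You justify it by reducing representations $g_i=\sum_j c_{ij}f_j$ modulo $p$ ``after clearing a denominator coprime to $p$''. But the denominator that must be cleared is a fixed integer depending on the chosen representations, and nothing in hypothesis (L2) prevents $p$ from dividing it; the argument of Lemma \ref{lemma:realiting-ideals mod-p} yields the containment only for all but finitely many primes. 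The lemma, however, is asserted for \emph{every} prime satisfying (L2)---and is applied in exactly that generality in Lemma \ref{lemma:lifting-correct}---so ``all but finitely many primes satisfying (L2)'' is a strictly weaker statement. In effect you are smuggling in condition (L1) (plus good reduction of the chosen syzygies), neither of which is among the hypotheses. The same objection applies to your preliminary claim that $\mathcal G(0)_p$ is well defined because we ``may assume'' $p$ avoids the obstructing primes.

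This containment is precisely the point where the graded structure is indispensable: for graded ideals one has $H_{\widetilde I_p}\le H_{I_p}\le H_{I}$ (the second inequality is Lemma \ref{hfiip}), and (L2) together with Remark \ref{rem:Macaulay} forces $H_{\widetilde I_p}=H_{I}$, whence $\widetilde I_p=I_p\supseteq (I_0)_p$ and the containment follows; there is no analogous Hilbert-function comparison in the non-graded case, which is why the paper homogenizes, applies the graded result in the Rees algebra, and dehomogenizes. The rest of your argument (leading exponents are preserved under reduction because the $g_i$ are monic, the monoideal generated by $\exp(\mathcal G(0)_p)$ equals $\exp(\widetilde I_p)$ by (L2), and reducedness of the tails is inherited) is sound once the containment is secured, but as written the proof establishes only a weakened form of the lemma.
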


\begin{proof}
We proceed as in the commutative setting. First of all, the graded case can be handled as in
\cite[Theorems 5.12 and 6.2]{A}. Next, we reduce the general case to the graded case by
adapting the proof of \cite[Theorem 2.4]{pfister2011}. For this, let $F(0)$ and $F(p)$ 
be finite sets of standard monomials obtained by applying Corollary \ref{Finite_det} to 
the Gr\"obner bases $\mathcal G(0)$ and $\mathcal G(p)$, respectively. Then apply 
\cite[Lemma 1.2.11]{GP7} to the set
\[
F = F(0)\cup F(p)\cup \{x_ix_j\mid i<j\}\cup \{x^{\alpha}\mid \alpha \in \mathcal N(d_{ij}) \text{ for some } d_{ij}\}
\]
to get a vector $\omega\in\N^n_{\geq 1}$ such that, for all $x^\alpha, x^\beta\in F$, we have 
$x^\alpha > x^\beta$ iff $x^\alpha >_\omega x^\beta$. Then, in particular, $>_\omega$ is admissible for $A$. 
Our choice of $F(0)$ gives that $\exp_{>}(\mathcal G(0))=\exp_{>_\omega}(\mathcal G(0))$ 
and that $\mathcal G(0)\subset A$ is the reduced Gr\"obner basis for $I$ also with respect to $>_\omega$. 
Similarly, $\exp_{>}(\mathcal G(p))=\exp_{>_\omega}(\mathcal G(p))$ and $\mathcal G(p)\subset A_p$ 
is the reduced Gr\"obner basis for $\tilde{I}_p$  also with respect to $>_\omega$. Passing to the Rees  algebra
$R^{\;\!\omega} \! A$, it follows from Proposition \ref{GBHomogenization}{(1)} that $\mathcal G(0)^{h}$ and 
$\mathcal G(p)^{h}$ are the reduced Gr\"obner bases for  
$I^h$ and $(\tilde{I}_p)^h$,
respectively, with $\exp_{>_{{\omega}}^h}(\mathcal G(0)^{h})=\exp_{>_{{\omega}}^h}(\mathcal G(p)^{h})$. 
Since the result holds in the graded case, we conclude that $(\mathcal G(0)_p)^{h} = 
(\mathcal G(0)^{h})_p=\mathcal G(p)^{h}$ and, thus,  that $\mathcal G(0)_p=\mathcal G(p)$.
\end{proof}

Error tolerant rational reconstruction as introduced in~\cite{bdfp} makes use
of Gaussian reduction.  If applied as discussed in what follows,  the finitely
many primes not satisfying condition {(L1)} will not influence the 
final result. We start with a definition which reflects that we rely on Gaussian reduction:

\begin{definition}[\cite{bdfp}]
If $\mathcal{P}$ is a finite set of primes, set
\[
N^{\prime}=\prod_{p\in\mathcal{P}\text{ lucky}}p \hspace{0.5cm} \text{and}%
\hspace{0.5cm} M=\prod_{p\in\mathcal{P}\text{ unlucky}}p\text{.}
\]
Then $\mathcal{P}$ is called \emph{sufficiently large} if
\[
N^{\prime}> (a^{2}+b^{2}) \cdot M
\]
for any coefficient $\frac{a}{b}$ of any element of $\mathcal G(0)$ (assume
$\mathop{gcd}(a,b)=1$).
\end{definition}

\begin{lemma}
\label{lemma:lifting-correct}
If $\mathcal{P}$ is a sufficiently large set of primes satisfying condition \emph{(L2)}, 
then the the reduced Gr\"obner bases $\mathcal G(p)$, $p\in\mathcal{P}$, lift via Chinese remaindering
and error tolerant rational reconstruction to the reduced Gr\"obner basis $\mathcal G(0)$. 
\end{lemma}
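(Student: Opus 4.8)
The plan is to combine \Cref{lemma:L2-cond} with the correctness of error tolerant rational reconstruction from \cite{bdfp}. First I would fix a sufficiently large set of primes $\mathcal P$ satisfying condition (L2), and set $N = \prod_{p\in\mathcal P} p$. By (L2), all the $\mathcal G(p)$, $p\in\mathcal P$, have the same leading exponents as $\mathcal G(0)$; in particular they consist of the same number $s$ of polynomials supported, term by term, on the same finite set of standard monomials. This matching of leading terms is exactly what lets us identify, for each monomial $x^\alpha$, the family of coefficients $(c_\alpha^{(p)})_{p\in\mathcal P}$ across the different primes, and then run Chinese remaindering followed by error tolerant rational reconstruction coefficientwise. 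So the problem reduces to checking, for a single fixed monomial position, that the reconstruction recovers the corresponding coefficient $\tfrac ab$ of $\mathcal G(0)$.

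Next I would invoke \Cref{lemma:L2-cond}: since every $p\in\mathcal P$ satisfies (L2), we have $\mathcal G(0)_p = \mathcal G(p)$ for every such $p$ \emph{for which the reduction $\mathcal G(0)_p$ is defined} — that is, for every $p$ not dividing any denominator occurring in $\mathcal G(0)$. For the finitely many primes $p\in\mathcal P$ that either fail condition (L1) or divide such a denominator, the residue $c_\alpha^{(p)}$ is simply some arbitrary element of $\mathbb F_p$, uncorrelated with $\tfrac ab$. Thus, writing $N' = \prod_{p\in\mathcal P\text{ lucky}} p$ and $M = \prod_{p\in\mathcal P\text{ unlucky}} p$ as in the definition of ``sufficiently large'', the Chinese remainder value $c\in\mathbb Z/N\mathbb Z$ agrees with $\tfrac ab \bmod N'$ on the lucky part and is arbitrary modulo $M$. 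This is precisely the input situation analysed in \cite{bdfp}: error tolerant reconstruction, which performs a Gaussian (lattice) reduction on the row $(c, N)$ — or more precisely on the $2$-dimensional lattice spanned by $(N,0)$ and $(c,1)$ — returns the unique $\tfrac ab$ with $\gcd(a,b)=1$ and $a^2+b^2 < N'/M$, and such a vector is guaranteed to be the shortest in the relevant sense exactly when $\mathcal P$ is sufficiently large, i.e.\ $N' > (a^2+b^2)M$.

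The main step is therefore to verify that the hypotheses of the reconstruction theorem of \cite{bdfp} are met at each monomial position. Concretely: (i) the set of primes for which $c_\alpha^{(p)} \equiv (\tfrac ab)_p$ fails is contained in the set of unlucky primes together with the divisors of $\mathop{\mathrm{denom}}(\tfrac ab)$ — but the latter already appear among the unlucky primes, since if $p$ divides a denominator of $\mathcal G(0)$ then $\mathcal G(0)_p$ is not even defined, so (L1) must fail for such $p$ (indeed $\mathcal G(0)_p = \mathcal G(p)$ would be meaningless), hence all the ``bad'' primes contribute to $M$; and (ii) the sufficiently-large hypothesis gives the required size bound $N' > (a^2+b^2)M$ uniformly over all coefficients of $\mathcal G(0)$. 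Granting these, \cite{bdfp} yields that each coefficient reconstructs correctly, and doing this simultaneously over all (finitely many) monomial positions of $\mathcal G(0)$ produces exactly $\mathcal G(0)$.

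I expect the only real subtlety to be the bookkeeping in point (i) — making sure that denominators of $\mathcal G(0)$ are correctly absorbed into the unlucky primes rather than forming a third, separately-handled class — since everything else is a direct citation of \cite{A, pfister2011, bdfp} once \Cref{lemma:L2-cond} is in hand.
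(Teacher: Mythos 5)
Your proposal is correct and follows essentially the same route as the paper: apply \Cref{lemma:L2-cond} to conclude $\mathcal G(0)_p=\mathcal G(p)$ for each $p\in\mathcal P$, and then invoke the error tolerant rational reconstruction results of \cite{bdfp} together with the sufficiently-large bound $N'>(a^2+b^2)M$. Your extra bookkeeping in point (i) is not actually needed under the stated hypothesis, since every $p\in\mathcal P$ satisfies (L2) and hence, by \Cref{lemma:L2-cond}, already yields the correct residue (in particular the reduction $\mathcal G(0)_p$ is defined), so no prime in $\mathcal P$ contributes an erroneous coefficient.
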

\begin{proof} 
Since all primes in  $\mathcal{P}$ satisfy condition {(L2)}, Lemma \ref{lemma:L2-cond}  
gives $\mathcal G(0)_p=\mathcal G(p)$ for each $p\in\mathcal{P}$. 
Since $\mathcal{P}$ is sufficiently large, the result follows as in the proof of 
\cite[Lemma 5.6]{bdfp} from \cite[Lemma 4.3]{bdfp}.
\end{proof}

Lemma \ref{lem finite} guarantees, in particular, that a sufficiently large
set $\mathcal{P}$ of primes satisfying condition {(L2)} exists. So 
from a theoretical point of view, the idea of finding $\mathcal G(0)$ is 
now as follows: Consider such a set $\mathcal{P}$, compute the reduced 
Gr\"{o}bner bases $\mathcal G(p)$, $p\in\mathcal{P}$, and lift the results 
to $\mathcal G(0)$.

>From a practical point of view, however, we face the problem that condition (L2) can only 
be checked a posteriori. On the other hand, as already pointed out, we need that the $\mathcal G(p)$, 
$p\in \mathcal{P}$, have the same set of leading monomials in order to identify corresponding 
Gr\"{o}bner basis elements in the lifting process. To remedy this situation, we suggest to 
proceed in a randomized way: First, fix an integer $t\geq1$ and choose a set of $t$ primes
$\mathcal{P}$ at random. Second, compute  $\mathcal{GP}=\{\mathcal G(p)\mid p\in\mathcal{P}\}$,
and use a majority vote:

\vspace{0.2cm}

\noindent\emph{\textsc{deleteByMajorityVote:} Define an equivalence relation
on $\mathcal{P}$ by setting $p\sim q:\Longleftrightarrow\exp(\mathcal G(p))=\exp(\mathcal G(q))$.
Then replace $\mathcal{P}$ by an equivalence class of largest
cardinality\footnote{We have to use a weighted cardinality
count: when enlarging $\mathcal{P}$, the total weight of the elements already
present must be strictly smaller than the total weight of the new elements.
Otherwise, though highly unlikely in practical terms, it may happen that only
unlucky primes are accumulated.}, and change $\mathcal{GP}$ accordingly.}

\vspace{0.2cm}
\noindent Now, all $\mathcal G(p)$, $p\in\mathcal{P}$, have the same set of leading
monomials. Hence, we can apply the error tolerant lifting algorithm to the
coefficients of the Gr\"{o}bner bases in $\mathcal{GP}$. If this algorithm
returns \texttt{false} at some point, we enlarge the set $\mathcal{P}$ by $t$
primes not used so far, and repeat the whole process. Otherwise, the lifting
yields a set of elements $\mathcal G\subset A$. Furthermore, if $\mathcal{P}$ is
sufficiently large, all primes in $\mathcal{P}$ satisfy condition {(L2)}. Since
we cannot check, however, whether $\mathcal{P}$ is sufficiently large, 
we include a final (partial) verification step in characteristic zero as discussed below. 
Since this test is particularly expensive if $\mathcal G\neq \mathcal G(0)$, we first perform 
a test in positive characteristic in order to increase our chances that the two sets
are equal:

\vspace{0.2cm}
\emph{\textsc{pTest:} Randomly choose a prime $p\notin\mathcal{P}$ which does
neither divide the numerator nor the denominator of any coefficient occurring in any
element of $\mathcal G$. Return \texttt{true} if $\mathcal G_{p}=
\mathcal G(p)$, and \texttt{false} otherwise.}

\vspace{0.2cm}
\noindent
If \textsc{pTest} returns \texttt{false}, then $\mathcal{P}$ is not
sufficiently large (or the extra prime chosen in \textsc{pTest} is unlucky). 
In this case, we enlarge $\mathcal{P}$ as above and repeat the process. If
\textsc{pTest} returns \texttt{true}, however, then most likely 
$\mathcal G=\mathcal G(0)$. In this case, we verify at least
that $\mathcal G$ is a left Gr\"obner basis, and that the left ideal 
$\langle \mathcal G\rangle$ generated by $\mathcal G$
contains the given left ideal $I$ (in the graded case discussed 
below, these two conditions actually guarantee that 
$\langle \mathcal G\rangle =I$). If the (partial) verification 
fails, we again enlarge $\mathcal{P}$ and repeat the process. 
We summarize this approach in Algorithm \ref{modGalgo} 
(as before, we ignore the primes which do not fulfil condition 
\eqref{cond:star}).

\begin{algorithm}
\caption{Modular Gr\"obner Basis Algorithm}\label{modGalgo}
\begin{algorithmic}[1]
\REQUIRE A nonzero left ideal $I\subset A$ given by finitely many generators, and
an admissible monomial ordering for $A$.
\ENSURE A subset $\mathcal G\subset A$ which is expected to be a Gr\"obner basis for $I$
               \phantom{AA}\quad\;(in the graded case, $\mathcal G$ is guaranteed to be such a Gr\"obner basis).
\STATE choose a set $\mathcal P$ of random primes
\STATE $\mathcal{GP}=\emptyset$
\LOOP
\FOR{$ p \in \mathcal P$}
\STATE compute $\mathcal{G}(p)\subset A_{p}$
\STATE $\mathcal{GP}=\mathcal{GP}\cup \mathcal{G}(p)$
\ENDFOR
\STATE $(\mathcal{P},\mathcal{GP})=\text{\emph{\textsc{deleteByMajorityVote}}}(\mathcal{P},\mathcal{GP})$
\STATE lift the Gr\"obner bases in $\mathcal{GP}$ to $\mathcal G\subset A$ via Chinese remaindering and error tolerant rational reconstruction
\IF {the lifting succeeds and \emph{\textsc{pTest}}$(I, \mathcal G,\mathcal{P})$}
\IF{ $\mathcal{G}$ is a Gr\"obner basis for $\langle \mathcal{G}\rangle$}
\IF{ $I\subset \langle \mathcal{G}\rangle$} 
\RETURN $\mathcal{G}$
\ENDIF
\ENDIF
\ENDIF
\STATE enlarge $\mathcal{P}$ with primes not used so far
\ENDLOOP
\end{algorithmic}
\end{algorithm}

Now, we address the graded case. We suppose that there is an $\omega\in\N_{\geq 1}^n$ such that $A$ and 
$I\subset A$ are graded with respect to the $\omega$-weighted degree as in Subsection
\ref{subsec: gradings}, and that $I$ is given by $\omega$-homogeneous generators. 
We use the index $d$ to indicate the graded 
pieces of $A$ and $I$ of degree $d$. Similarly for the other rings and ideals
considered in this section (such as $A_0$, $I_0$, and the $\tilde{I}_p$), 
which all inherit the grading.

We proceed by  considering Hilbert functions as in Arnold's 
paper ~\cite{A} which handles the commutative case.

\begin{lemma}\label{hfiip}
With notation and assumptions as above, let $p$ be a prime. Then 
$ H_{I_p}(d)  \leq H_{I}(d)$ for each $d \in \mathbb{N}$. 
\end{lemma}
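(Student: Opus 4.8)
The plan is to compare $I$ and $I_p$ degree by degree via their reductions modulo $p$, using the fact that everything in sight is graded. Fix $d\in\N$. First I would observe that the graded piece $I_d\subset A_d$ is a finite-dimensional $\Q$-vector space (since $\omega\in\N_{\geq 1}^n$ and hence $A_d$ is finite-dimensional). Pick a $\Q$-basis $h_1,\dots,h_m$ of $I_d$ consisting of elements with integer coefficients; this is possible because we may scale any rational basis to clear denominators. Then $h_1,\dots,h_m\in I\cap A_0$, so their reductions $(h_1)_p,\dots,(h_m)_p$ lie in $I_p$, and in fact in $(I_p)_d=(I_p)\cap(A_p)_d$ since reduction modulo $p$ respects the $\omega$-grading (the commutation relations $C$ and hence $C_p$ are $\omega$-homogeneous by the standing assumption).

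Next I would argue that $(I_p)_d$ is spanned over $\mathbb{F}_p$ by these reductions. The point is that $I_p={}_{A_p}\langle f_N \mid f\in I_0\rangle$ and $I_0=I\cap A_0$; any $\omega$-homogeneous element of $I_p$ of degree $d$ arises as an $A_p$-combination of reductions of elements of $I_0$, and by extracting the degree-$d$ part (using that $A_p$ is graded and that every generator can be taken $\omega$-homogeneous) it is an $\mathbb{F}_p$-linear combination of reductions of degree-$d$ elements of $I_0$. But every degree-$d$ element of $I_0$ lies in $I_d$, which is $\Q$-spanned by $h_1,\dots,h_m$; clearing denominators in such a representation and reducing mod $p$ (for this particular fixed element — note we only need one prime and one element at a time, so no "finitely many bad primes" issue arises here, only the denominators of the fixed element) shows its reduction lies in the $\mathbb{F}_p$-span of $(h_1)_p,\dots,(h_m)_p$. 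Hence $(I_p)_d=\mathrm{span}_{\mathbb{F}_p}\{(h_1)_p,\dots,(h_m)_p\}$, and therefore
$$
H_{I_p}(d)=\dim_{\mathbb{F}_p}(I_p)_d\leq m=\dim_\Q I_d=H_I(d).
$$

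The main obstacle I anticipate is the bookkeeping in the second step: carefully justifying that an arbitrary $\omega$-homogeneous element of $I_p$ of degree $d$ is an $\mathbb{F}_p$-span of reductions of degree-$d$ elements of $I_0$. One must be a little careful that, in a representation $\sum a_k (f_k)_p$ with $a_k\in A_p$ and $f_k\in I_0$, one can take everything $\omega$-homogeneous and then pass to the homogeneous component of degree $d$; this uses that the $d_{ij}$ (and $d_{ij}$ reduced mod $p$) are $\omega$-homogeneous so that the grading is genuinely respected by multiplication in $A$ and $A_p$. Alternatively, and perhaps more cleanly, one can simply note that $(I_p)_d$ is an $\mathbb{F}_p$-subspace of the reduction of the $\Z$-lattice $L_d:=I_d\cap A_{0,d}$ (a finitely generated free $\Z$-module of rank $m$, since it is a subgroup of the finitely generated free $\Z$-module $A_{0,d}$), and the reduction of any rank-$m$ lattice modulo $p$ has $\mathbb{F}_p$-dimension exactly $m$; so $H_{I_p}(d)\leq m=\mathrm{rk}\,L_d=\dim_\Q I_d=H_I(d)$. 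This lattice formulation sidesteps the homogeneity bookkeeping and is the version I would ultimately write up.
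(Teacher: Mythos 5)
Your final (lattice) formulation is essentially the paper's proof: both take the free $\mathbb{Z}$-module $(I_0)_d=I_d\cap (A_0)_d$, observe that its rank equals $\dim_{\Q} I_d$ and that its reduction modulo $p$ spans $(I_p)_d$, whence the inequality. Note, though, that your first variant is subtly flawed and you were right to discard it: an arbitrary $\Q$-basis of $I_d$ with integer coefficients need not reduce to a spanning set of $(I_p)_d$ (e.g.\ $h_1=2x$ spans the same $\Q$-line as $x$ but reduces to $0$ mod $2$), so one genuinely needs a $\mathbb{Z}$-basis of the saturated lattice, which is what both you (ultimately) and the paper use.
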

\begin{proof} 
Fix a degree $d \in \mathbb{N}$.  We must show that 
$\dim_{\ \!\! \mathbb{Z}/p\mathbb{Z}}\;\! (I_p)_d \leq \text{dim}_{\ \!\!\Q} \;\! I_d$. For 
this, first note that $(I_0)_d$ is a free $\mathbb{Z}$-submodule of $(A_0)_d$
of finite rank. Let $\mathcal B =\{b_1,\ldots,b_m\}$ be a $\mathbb{Z}$\;\!-basis for $(I_0)_d$. 
Then note that for each $f\in I$, there is an integer $a \in \Z$ such that
$a\cdot f\in I_0$. This implies that $\mathcal B$ is also a $\Q$-basis for $I_d$, so that
$
\text{dim}_{\ \!\!\Q}\;\! I_d = \text{rank}_{\ \!\! \mathbb{Z}}\;\!(I_0)_d.
$
Furthermore, the reduction $\mathcal B_p$ still generates the 
$\mathbb{Z}/p\mathbb{Z}$-vector space $I_p(d)$. 
Hence,
$
\text{dim}_{\ \!\! \mathbb{Z}/p\mathbb{Z}}\;\!(I_{p})_d\leq\text{rank}_{\ \!\! \mathbb{Z}}\;\!(I_0)_d.
$
We conclude that 
$
\text{dim}_{\ \!\! \mathbb{Z}/p\mathbb{Z}}\;\! (I_p)_d \leq \text{dim}_{\ \!\!\Q} \;\! I_d,
$
as claimed.
\end{proof}

We can now prove:

\begin{theorem}[Final Verification, Graded Case]\label{hom} With notation and assumptions
as above, suppose that
\begin{enumerate}
\item\label{thm-two} $\exp(\mathcal G)=\exp(\mathcal G(p))$ for some prime $p$,
\item\label{thm-one} $\mathcal G$ is a Gr\"obner basis, and 
\item\label{thm-three} $I\subset \langle \mathcal G\rangle$.
\end{enumerate} 
Then  $\mathcal G$ is a Gr\"obner basis for $I$.
\end{theorem}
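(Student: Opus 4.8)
The plan is to reduce the statement to showing that the left ideals $\langle\mathcal G\rangle$ and $I$ actually coincide. Once we know $\langle\mathcal G\rangle=I$, hypothesis \ref{thm-one} (that $\mathcal G$ is a Gr\"obner basis for $\langle\mathcal G\rangle$) immediately gives that $\mathcal G\subset I$ and that $\exp(I)=\bigcup_{g\in\mathcal G}(\exp(g)+\N^n)$, i.e.\ that $\mathcal G$ is a Gr\"obner basis for $I$. So everything comes down to proving the equality of the two graded ideals, which I would do by comparing Hilbert functions. First I would observe that $\mathcal G$ consists of $\omega$-homogeneous elements: it is obtained by lifting, coefficientwise, the reduced Gr\"obner bases $\mathcal G(p)$, and these are $\omega$-homogeneous because each $\tilde I_p$ is generated by the $\omega$-homogeneous elements $(f_j)_p$ (see the discussion of the graded case in Subsection~\ref{sec:GB}); since all the $\mathcal G(p)$, $p\in\mathcal P$, have the same leading exponents, the lift of each Gr\"obner basis element is supported in a single $\omega$-degree. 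Hence $\langle\mathcal G\rangle$ is a graded left ideal, and (as $\omega\in\N_{\geq 1}^n$) its Hilbert function is defined. From hypothesis \ref{thm-three} we get $I_d\subset\langle\mathcal G\rangle_d$ for every $d$, hence $H_I(d)\leq H_{\langle\mathcal G\rangle}(d)$.

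Next I would use Macaulay's result in the form of Remark~\ref{rem:Macaulay}: for any nonzero graded left ideal $J\subset A$ one has $\dim_K(A/J)_d=\#\{\alpha\in\N^n\setminus\exp(J)\mid |\alpha|_\omega=d\}$, so that $H_J(d)=\dim_K A_d-\dim_K(A/J)_d=\#\{\alpha\in\exp(J)\mid |\alpha|_\omega=d\}$ depends only on the monoideal $\exp(J)$. Now hypothesis \ref{thm-one} gives $\exp(\langle\mathcal G\rangle)=\bigcup_{g\in\mathcal G}(\exp(g)+\N^n)$, while hypothesis \ref{thm-two} together with the fact that $\mathcal G(p)$ is a (reduced) Gr\"obner basis for $\tilde I_p$ gives $\bigcup_{g\in\mathcal G}(\exp(g)+\N^n)=\bigcup_{g\in\mathcal G(p)}(\exp(g)+\N^n)=\exp(\tilde I_p)$. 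Both ideals are nonzero (if $\exp(\mathcal G)=\emptyset$ then $\langle\mathcal G\rangle=0$, contradicting $0\neq I\subset\langle\mathcal G\rangle$), so the preceding paragraph applies and yields $H_{\langle\mathcal G\rangle}(d)=H_{\tilde I_p}(d)$ for all $d$.

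It then remains to bound $H_{\tilde I_p}$ from above by $H_I$. Since each $(f_j)_p$ lies in $I_p$, we have $\tilde I_p\subset I_p$, hence $H_{\tilde I_p}(d)\leq H_{I_p}(d)$, and Lemma~\ref{hfiip} gives $H_{I_p}(d)\leq H_I(d)$. Combining everything,
\[
H_{\langle\mathcal G\rangle}(d)=H_{\tilde I_p}(d)\leq H_{I_p}(d)\leq H_I(d)\leq H_{\langle\mathcal G\rangle}(d),
\]
so all terms are equal; in particular $H_I(d)=H_{\langle\mathcal G\rangle}(d)$ for every $d$. Together with the inclusion $I_d\subset\langle\mathcal G\rangle_d$ of finite-dimensional $K$-vector spaces of equal dimension, this forces $I_d=\langle\mathcal G\rangle_d$ for all $d$, hence $I=\langle\mathcal G\rangle$, and the proof is complete.

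I do not expect a deep obstacle here: the one inequality carrying genuine content, $H_{\tilde I_p}(d)\leq H_I(d)$, is supplied by Lemma~\ref{hfiip} once one notes $\tilde I_p\subset I_p$, and the rest is the standard ``inclusion $+$ equal Hilbert functions'' argument. The point that needs care is purely bookkeeping: one must make sure that every object in play ($\mathcal G$, $\langle\mathcal G\rangle$, $\tilde I_p$, $I_p$) is genuinely $\omega$-graded so that Hilbert functions and Remark~\ref{rem:Macaulay} apply, and one must not overlook the degenerate case in which one of the ideals is zero.
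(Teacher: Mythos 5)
Your argument is correct and is essentially the paper's own proof: the same chain $H_I(d)\leq H_{\langle\mathcal G\rangle}(d)=H_{\tilde I_p}(d)\leq H_{I_p}(d)\leq H_I(d)$, with the equality coming from hypotheses (1)--(2) via Remark~\ref{rem:Macaulay} and the last inequality from Lemma~\ref{hfiip}. You merely spell out two points the paper leaves implicit, namely that $\langle\mathcal G\rangle$ is graded and that the Hilbert function depends only on the leading-exponent monoideal.
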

\begin{proof}
The result will follow from the second assumption once we show that  
$I=\langle \mathcal G \rangle$. Since $I\subset \langle \mathcal G \rangle$ by 
the third assumption, it suffices to show that $H_{I}(d) = H_{\langle \mathcal G \rangle}(d)$ 
for all $d\in\N$. This, in turn,  holds since we have
$$
H_{I}(d) \leq H_{\langle \mathcal G \rangle}(d)   = H_{\tilde{I}_p}(d) \leq H_{{I}_p}(d)\leq H_{I}(d)
$$
for each $d$ and each prime $p$ satisfying the first assumption. Indeed, the first and second
inequality are clear since $I\subset \langle \mathcal G \rangle$
and $\tilde{I}_p\subset I_p$, respectively; the equality follows from the first assumption (see Remark 
\ref{rem:Macaulay}); the third inequality has been established in Lemma \ref{hfiip}.
\end{proof}

\begin{remark}
Note that in all non-graded examples where we could check the output of
Algorithm \ref{modGalgo} by computing the desired Gr\"obner basis also 
directly over $\Q$, the result was indeed correct.
\end{remark}

\section{Timings}\label{compresult}
We have implemented our modular algorithm for computing Gr\"obner bases
in $G$-algebras over $\mathbb{Q}$ in the subsystem \plural \cite{plural, VikHans} 
of the computer algebra system \singular~\cite{singular}. This system offers
two variants of Buchberger's algorithm which within \plural  are adapted to the 
noncommutative case: While the \std command refers to the default version 
of Buchberger's algorithm in \singular, the ideas behind \slimgb aim at keeping 
elements short with small coefficients. 

In this section,  we compare the performance of the modular algorithm with that of 
\std and \slimgb applied directly over the rationals. In the tables below, when 
referring to the modular algorithm, we write modular \std \
respectively  modular \slimgb to indicate which version of 
Buchberger's algorithm is used for the mod $p$ computations.

We have carried out the computations on a Dell PowerEdge R720 with two 
Intel(R) Xeon(R) CPU E5-2690  @ 2.90GHz, 20 MB Cache, 16 Cores, 32 Threads, 
192 GB RAM with a Linux operating system (Gentoo).

In the tables, we abbreviate seconds, minutes, hours as s,m,h and
threads as thr. The symbol $\infty$ indicates that the computation 
did not finish within $25$ days or was halted since it consumed
more than 100 GB of memory.

\subsection{Examples From $D$-Module Theory Involving the Weyl Algebra}
\label{subsect:BSP}

We consider families of ideals which are computationally challenging and of interest 
in the context of $D$-modules,  specifically in the context of Bernstein-Sato 
polynomials. 

\subsubsection{The Setup}
\label{subsubsect:set-up}

Let $K$ be  a field of  characteristic zero, and consider a
non-constant polynomial $f\in K[x_1,\ldots,x_n]$. Write
$$
D_n(\field)=\field\langle x_1,\ldots, x_n,
\partial _1,\ldots \partial _n \mid \partial_i x_i=x_i\partial _i +1, \partial _i x_j=x_j \partial _i \ \text { for }\ i\neq j\rangle, 
$$
for the $n$-th  Weyl algebra as in Example \ref{ex1},  let $s$ be an extra variable, and set 
$K[x]_f[s]=K[x_1\dots,x_n]_f\otimes_{K} K[s]$ and $D_n(K)[s] = D_n(K) \otimes_{K} K[s]$. 
Let $K[x]_f[s]f^s$ stand for the free $K[x]_f[s]$-module of rank one generated 
by the symbol $f^s$. This is a left $D_n(K)[s]$-module with the action of a vector 
field $\theta$ being defined by the formula $\theta\cdot f^s=\theta(f^s)=s 
\theta(f) f^{-1} f^s$ and the product rule.  Consider the left annihilator $\Ann_{D_n(K)[s]}(f^s)
\subset D_n(K)[s]$. The Bernstein-Sato polynomial $b_f\in K[s]$ is the nonzero monic 
polynomial of smallest degree such that there exists an operator $P\in D_n(K)[s]$ with 
\begin{equation}\label{bseq1}
b_f - P\cdot f \in \Ann_{D_n(K)[s]}(f^s).
\end{equation} 
Put differently, $b_f$ is defined to be the monic generator of the ideal
$$
 \left(\Ann_{D_n(K)[s]}(f^s)+ {}_{D_n(K)[s]}\langle f\rangle \right) \cap K[s]
$$
which, by a result of Bernstein \cite{bs},  is nonzero. More generally, given 
polynomials $f_1,\dots, f_r\in K[x_1,\ldots,x_n]$ and extra variables $s=s_1,\dots, s_r$, 
consider the symbol $f^s=f_1^{s_1}\cdots f_r^{s_r}$. Then the analogous construction 
yields the \emph{Bernstein-Sato ideal}
$$
\mathcal{B}_{f_1,\ldots,f_r}(s) = 
\left(
\Ann_{D_n(K)[s]}(f^{s})+ {}_{D_n(K)[s]}\langle f_1 \cdots f_r\rangle\right) \cap K[s],
$$
which is nonzero by a result of Sabbah \cite{Sabbah87a}.  

\subsubsection{Computing the Annihilator}
\label{subsect:ann}

There are several algorithms for computing $\Ann_{D_n(K)[s]}(f^s)$ (see \cite{ABLMS}). For our tests here,
we use the method of Brian\c{c}on and Maisonobe which can be described as follows:
Consider the $r$th shift algebra 
$$
S_r(K)=\langle s_1.\dots, s_r, t_1, \dots, t_r \mid t_j s_k = s_k t_j - \delta_{jk} t_j  \rangle
$$
as in Example \ref{ex:ex2}, the tensor product
$$
A=D_n(K)\otimes S_r(K),
$$
and the left ideal 
\[
I =\left\langle s_j +  f_j t_j, \sum^r_{k=1} \frac{\partial f_k}{\partial x_i} {t}_k + \partial_i
\mathrel{\bigg|} 1\leq j \leq r, \ 1\leq i \leq n\right\rangle\  
\subset A.
\]

Then $\Ann_{D_n(K)[s]}(f^s) = I \cap D_n(K)[s]$. Hence, the annihilator is obtained by computing
a left Gr\"obner basis for $I$ with respect to an elimination ordering for $t_1,\dots, t_r$.

\subsubsection{Computing the Bernstein-Sato Ideal}
\label{subsect:BS}

By its very definition, the Bern\-stein-Sato ideal and, thus, the Bern\-stein-Sato polynomial
if $r=1$
can be found by computing a left Gr\"obner basis for 
$$\Ann_{D_n(K)[s]}(f^s)+{}_{D_n(K)[s]}\langle f_1\cdots f_r\rangle$$ with respect to an elimation ordering 
for $x_1,\ldots,x_n$, $\d_1,\ldots,\d_n$.

\begin{remark} There are more effective ways of computing Bernstein-Sato polynomials.
The method described above, however, allows one to compute Bernstein-Sato
ideals in general. See \cite{ABLMS} for more details.
\end{remark}

\subsubsection{Explicit Examples}

We focus on the computation of Bernstein-Sato polynomials
as outlined above (the case $r=1$). In all examples presented in
what follows, the time for computing the annihilator in 
\ref{subsect:ann} is negligible.  We will therefore only list 
the time needed for the elimination step in
\ref{subsect:BS}. Here, we use the block ordering
obtained by composing the respective degree reverse lexicographical 
orderings.

\begin{example}[$\text{Reiffen}(p,q)$, \cite{Reiffen}]\label{ReiffenEx}
We consider the family of polynomials 
\[
x^p + y^q + xy^{q-1} \in  \mathbb{Q}[x,y],\text{ where }q\geq p+1,
\]
and, correspondingly,  the second Weyl algebra $D_2(\mathbb{Q})$.

\vskip0.4cm
\begin{center}
\emph{
\begin{tabular}{|c|c|c|*{5}{c|}}
\hline
& \std & \slimgb &\multicolumn{5}{|c|}{modular \slimgb}\\ \cline{4-8} 
 & & & 1 thr  & 2 thr  & 4 thr & 8 thr & 16 thr \\
\hline
\hline
Reiffen(5,6) & $\infty$&63.86 h& 12.25 m & 7.21 m &4.7 m& 3.45 m&2.6 m \\
Reiffen(6,7) & $\infty$&$\infty$ &10.43 h & 6.03 h &4.65 h& 4.24 h &3.54 h\\
Reiffen(7,8) &$\infty$&$\infty$ &336.25 h &212.24 h & 170 h& 146 h&118 h\\ \hline
\end{tabular}
}
\end{center}

\vskip0.4cm
\noindent
For more insight, we also give timings for running our algorithm without 
the final tests  which check whether $\mathcal G$ is a left Gr\"{o}bner basis
and whether $I\subset \langle \mathcal G \rangle $
(see the discussion in Section \ref{sec:modgb}). We use just one thread.

\vskip0.4cm
\begin{center}
\emph{
\begin{tabular}{|c|c|c|}
\hline
& modular \slimgb & modular \slimgb without final tests \\  
\hline
\hline
Reiffen(5,6) & 12.25 m& 10.15 m \\
Reiffen(6,7) & 10.43 h &6.50 h \\
Reiffen(7,8) & 336.25 h &200.88 h \\ \hline
\end{tabular}
}
\end{center}

\vskip0.4cm
\noindent
We see that for $\Reiffen(5,6)$, $\Reiffen(6,7)$, and $\Reiffen(7,8)$, the final tests take 
about 17\%, 37\%, and 40\% of the total computing time,~respectively.

\end{example}
\begin{example}\label{sim_Reiffen}
We consider the following polynomials with rational coefficients,
\begin{itemize}
\item[]$f=xy^5z+y^6+x^5z+x^4y$,
\item[]$g= xy^6z+y^7+x^6z+x^5y$,
\item[]$h = (x-z)xyz(-x+y)(y+z)$,
\item[]$\cusp(p,q) = x^p-y^q,\text{ where }\gcd(p,q)=1$,
\end{itemize}
and, correspondingly,  the third and second Weyl algebras over
$\mathbb{Q}$,
respectively.
\noindent

\vskip0.4cm
\begin{center}
\emph{
\begin{tabular}{|c|c|c|c|}
\hline
&  \std& \slimgb & modular \slimgb  \\ 
\hline
\hline
f&$\infty$& 3.93 h & 3.59 h\\
g&  $\infty$ &$\infty$& 284.46 h  \\
h& $\infty$ &$\infty$ & 19.19 h  \\ 
\hline
cusp(9,8)&$\infty $ & 2.00 s& 30.81 s\\
cusp(10,9) &  $\infty$  &4.53 h& 3.17 h  \\
cusp(11,7)&$\infty $ &2.06 s& 2.18 m\\
cusp(11,8) & $\infty$ &3.17 h & 1.97 h  \\
cusp(12,7)&$\infty $ &9.53 s& 1.04 m\\
cusp(13,7)&$\infty $ &1.21 h& 40.32 m\\
\hline
\end{tabular}
}
\end{center}

\vskip0.4cm
\noindent
We observe that for the smaller examples such as  $\cusp(9,8)$, $\cusp(11,7)$, and  $\cusp(12,7)$, 
the \slimgb version of Buchberger's algorithm is superior due to the overhead of the modular algorithm.
\end{example}

Considering the substitution homomorphism $D_n(K)[s] \rightarrow D_n(K),\ s\mapsto -1$, 
it easily follows from Equation \ref{bseq1} that the Bernstein-Sato polynomial
${b_{f}(s)}$ is divisible by $s+1$ (recall that we suppose that $f$ is
non-constant). The polynomial $\frac{b_{f}(s)}{s+1} \in \field [s]$ 
is sometimes called the {\em reduced Bernstein-Sato polynomial}. It is 
easy to see that the following holds:
\begin{equation}
\label{equ:red-BSP}
\left\langle \frac{b_{f}(s)}{s+1}\right\rangle= \left (\Ann_{D_n(\mathbb{Q})[s]}(f^s)
+{\scriptsize{\phantom{}_{\raisebox{-2.5ex}{${D_n(\mathbb{Q})[s]}$}}}}\!\!\left\langle f, \frac{\partial f}{\partial x_{1}},
\ldots,\frac{\partial f}{\partial x_{n}}\right\rangle\right) \cap \field [s].
\end{equation}
Computing the Bernstein-Sato polynomial via this equation 
may be considerably faster than using the method described earlier: 
Compare the timings for $\cusp(13,7)$ in the tables above and below.

\begin{example} Equation \eqref{equ:red-BSP} allows us to compute the
Bernstein-Sato polynomials in some of the more involved  $\cusp(p,q)$
instances:

\vskip0.4cm
\begin{center}
\emph{
\begin{tabular}{|c|c|c|c|}
\hline
& \std & \slimgb & modular \slimgb  \\ 
\hline
\hline
cusp(13,7)&$\infty$ &3.94 m &1.21 m \\
cusp(13,8)&$\infty$ &$\infty$  &2.21 m \\
cusp(13,9)&$\infty$ &$\infty$ & 5.67 m \\
cusp(13,10)&$\infty$ &$\infty$  & 9.43 m \\
cusp(13,11)&$\infty$ &$\infty$  & 18.71 m \\
cusp(13,12)&$\infty$&$\infty$ & 27.25 m \\
cusp(14,9)&$\infty$ &$\infty$ & 7.83 m \\
cusp(14,11)&$\infty$&$\infty$  & 27.08 m \\
cusp(14,13)&$\infty$&$\infty$ & 1.16 h \\
cusp(15,7)&$\infty$ &2.74 h & 2.15 m \\
cusp(15,8)&$\infty$ & $\infty$  & 4.00 m \\
cusp(15,11)&$\infty$& $\infty$  & 36.01 m \\
cusp(15,13)&$\infty$& $\infty$ & 1.56 h \\
cusp(17,13)&$\infty$& $\infty$  &3.23 h\\
cusp(19,13)&$\infty$& $\infty$ &6.12 h\\
cusp(19,17)&$\infty$& $\infty$ &29.06 h\\
\hline
\end{tabular}
}
\end{center}

\vskip0.4cm
\end{example}

\begin{example}\label{jacobian_trickEx} We consider the polynomial 
$$
f=(x^4+y^4)(w^2+z^2)(x+z)\in\mathbb{Q}[w,x,y,z],
$$
and compute the Bernstein-Sato polynomial $b_f$ using  Equation \eqref{equ:red-BSP}:
 
\vskip0.4cm
\begin{center}
\emph{
\begin{tabular}{|c|c|c|*{5}{c|}}
\hline
& \std & \slimgb &\multicolumn{5}{|c|}{modular \slimgb }\\ \cline{4-8} 
 & & & 1 thr  & 2 thr  & 4 thr & 8 thr & 16 thr \\
\hline
\hline
f & $\infty$& $\infty$ &531.71  h& 322.68 h &205.85 h&118.30 h&88.01 h \\ \hline
\end{tabular}
}
\end{center}

\vskip0.4cm
\noindent
For the polynomial 
$$
g=(x^5+y^5)(w^2+z^2)(x+z) \in\mathbb{Q}[w,x,y,z],
$$
already the Gr\"obner basis computation over $\mathbb{F}_p$, for just one
randomly selected \singular prime $p$, takes 240 hours. 
The direct computation  over $\mathbb{Q}$ using \std and \slimgb runs out of memory.
\end{example}

\subsection{Some Well-Known Benchmark Examples}

\begin{example}\label{qc_ex}
We consider the quasi-commutative  graded $\mathbb{Q}$-algebra
\[ A=\mathbb{Q}\langle x_1,\ldots,x_n \mid x_jx_i=2x_ix_j,~ 1 \leq i < j \leq n\rangle\]
together with the degree reverse lexicographic ordering (and, thus, the weight vector 
$\omega=(1,\dots,1)$). In the corresponding Rees algebra, we compute left Gr\"obner
bases for homogenized versions of the benchmark systems 
\emph{cyclic}(n), \emph{katsura}(n), \emph{reimer}(n), and \emph{eco}(n) (see \cite{sd}).
Here are the timings:

\vskip0.4cm
\begin{center}
\emph{
\begin{tabular}{|c|c|*{5}{c|}}
\hline
& \slimgb &\multicolumn{5}{|c|}{modular \slimgb }\\ \cline{3-7} 
 & & 1 thr  & 2 thr  & 4 thr & 8 thr & 16 thr \\
\hline
\hline
cyclic(7) & 11.24 m& 27.66 s& 16.13 s &9.66 s & 7.81 s &6.64 s\\
cyclic(8) &55.28 h &2.51 h & 1.21 h   &34.65 m & 27.64 m &17.13 m\\ \hline
katsura(9) &4.49 m &1.51 m & 49.27 s &30.60 s&21.77 s&16.28 s  \\
katsura(10) & 10.65 h& 26.83 m&14.54 m   &8.59 m& 3.53 m& 3.38 m\\
katsura(11) &199.71 h & 4.32 h& 2.76 h &1.59 h& 46.48 m&24.52 m  \\
katsura(12) &$\infty$ & 13.78 h& 7.68 h& 4.40 h&2.34 h &1.46 h\\
katsura(13) &$\infty$&50.14 h&32.33 h &17.74 h& 10.72 h&5.80 h\\
\hline
reimer(4) &14.62 s & 3.14 s& 2.69 s  & 1.99 s & 1.58 s & 1.48 s  \\
reimer(5) &29.07 h &2.59 h& 1.57 h &58.47 m &26.33 m & 18.04 m  \\
\hline
eco(15) &25.93 h &  9.40 h &5.77 h &3.54 h & 2.55 h  & 1.83 h  \\
\hline 
\end{tabular}
}
\end{center}

\vskip0.4cm
\end{example}

\subsection{A Remark on the Number of Primes}

\begin{remark}
\label{rem:number-primes}
The efficiency of our algorithm depends, in particular, on the number
of modular Gr\"obner basis computations before the lifting and testing
steps. In our implementation, this  is the smallest multiple of the number 
of available threads which is greater than or equal to $20$. 
\end{remark}

\section{Conclusion}\label{sect:con}
In this paper, we have introduced modular techniques for the computation of
Gr\"obner bases in \PBW-algebras defined over $\mathbb{Q}$. On the theoretical side, 
we have shown that the final verification test for graded ideals, which 
is well-known from the commutative case, also works in the noncommutative 
setting. On the practical side, we have implemented our modular algorithm
in the subsystem \plural of \singular and have demonstrated that the new 
algorithm is typically superior to the non-modular versions of Buchberger's 
algorithm in \plural. 

\bibliographystyle{abbrv}

\end{document}